\newtheorem{theorem}{Theorem}[section]
\newtheorem{lemma}[theorem]{Lemma}
\newtheorem{corollary}[theorem]{Corollary}
\theoremstyle{definition}
\newtheorem{definition}[theorem]{Definition}
\newtheorem{example}[theorem]{Example}
\theoremstyle{remark}
\newtheorem{remark}[theorem]{Remark}
\numberwithin{equation}{section}
\begin{document}
\setcounter{page}{1}

\title[ The nuclear trace for vector-valued operators]{The nuclear trace of periodic vector-valued pseudo-differential operators  with applications to index theory }

\author[D. Cardona]{Duv\'an Cardona}
	\address{
		Duv\'an Cardona:
		\endgraf
		Department of Mathematics  
		\endgraf
		Pontificia Universidad Javeriana
		\endgraf
		Bogot\'a
		\endgraf
		Colombia
		\endgraf
		{\it E-mail address} {\rm 
			duvanc306@gmail.com}
	}
	
	\author[V. Kumar]{Vishvesh Kumar}
	\address{
		Vishvesh Kumar:
		\endgraf
		Department of Mathematics  
		\endgraf
		Indian Institute of Technology Delhi.
		\endgraf
		New Delhi - 110016
		\endgraf
		India
		\endgraf
		{\it E-mail address} {\rm vishveshmishra@gmail.com}
	}


\subjclass[2010]{Primary 35A08; Secondary 42B15, 46F10.}

\keywords{ Index Theory, pseudo-differential operators,  Fourier multipliers, vector-valued $L^p$-spaces, nuclear operators.}

\begin{abstract}
In this paper we investigate the nuclear trace of vector-valued Fourier multipliers on the torus and its applications to the index theory of periodic pseudo-differential operators. First we characterise the nuclearity of pseudo-differential operators acting on Bochner integrable functions. In this regards, we consider the periodic and the discrete cases. We go on to address  the problem of finding sharp sufficient conditions for the nuclearity of vector-valued Fourier multipliers on the torus. We end our investigation with two index formulae. First, we express the index of a vector-valued Fourier multiplier in terms of its operator-valued symbol and then we use this formula for expressing the index of certain elliptic operators belonging to periodic H\"ormander classes.\\
\textbf{MSC 2010.} Primary 35S05, 47G30 Secondary 28B05,  42B15, 43A15.
\end{abstract} \maketitle

\tableofcontents
\allowdisplaybreaks
\section{Introduction}
\subsection{Outline of the paper}
This contribution is devoted to study the nuclearity of vector-valued pseudo-differential operators on the torus. Our main goal is to investigate the nuclear trace of these operators and its applications to the index  theory of periodic operators. 

Let $\mathbb{T}^n\simeq \mathbb{R}^n/\mathbb{Z}^n$ be the $n$-dimensional torus and let $H$ be a (real or complex) Hilbert space. Denote the  algebra of bounded linear operators on $H$ by $\mathcal{L}(H)$. The vector-valued quantization of a function              $
    \sigma: \mathbb{T}^n\times \mathbb{Z}^n\rightarrow \mathcal{L}(H)$ is the operator defined by  
\begin{eqnarray} \label{Pseudotorus}
Af(x)\equiv T_\sigma f(x):=\sum_{\xi\in\mathbb{Z}^n}e^{i2\pi x\cdot \xi}\sigma(x,\xi)\widehat{f}(\xi), \,\,x:=(x_1,x_2,\cdots ,x_n)\in \mathbb{T}^n,
\end{eqnarray} for all $f\in C^{\infty}(\mathbb{T}^n,H). $ Here, $\widehat{f}$ is the \emph{vector-valued Fourier transform} of $f$ defined in the sense of Bochner as
\begin{equation}
    \widehat{f}(\xi)=\int\limits_{\mathbb{T}^n}e^{-i2\pi x\cdot \xi}f(x)dx\in H,\,\,\xi:=(\xi_1,\xi_2,\cdots ,\xi_n)\in\mathbb{Z}^n. 
\end{equation} 
Here $x\cdot \xi=x_1\xi_1+\cdots + x_n\xi_n$ denotes  the usual inner product on $\mathbb{R}^n.$ If $\dim H\geq \aleph_0,$ the function $\sigma$ is said to be  the \emph{operator-valued symbol} associated to the operator $A.$ For $2\leq  \dim H<\infty,$ $\mathcal{L}(H)\simeq \mathbb{C}^{\dim H\times \dim H }$ and  $\sigma$ becomes a \emph{matrix-valued symbol} associated to  $A.$ If $H$ is an one-dimensional complex Hilbert space,  then $H\simeq \mathbb{C}$ and under the identification  $\mathcal{L}(H)\simeq \mathbb{C},$ the symbol   $\sigma$ is scalar-valued. We will consider two cases, namely, $\dim H=\aleph_0$ (i.e. $H$ is a separable Hilbert space) or $\dim H=1,$ (i.e., $H=\mathbb{C}$).

Of particular interest are vector-valued Fourier multipliers  which are pseudo-differential operators with symbols $\sigma$ depending only on the frequency variable $\xi\in \mathbb{Z}^n.$ The vector-valued Fourier multipliers are characterised by the relation,
\begin{equation}
    \widehat{Af}(\xi)=\sigma(\xi)\widehat{f}(\xi),\,\,f\in C^{\infty}(\mathbb{T}^n,H),\,\,\widehat{f}(\xi)\in H,\,\,\,\sigma(\xi)\in\mathcal{L}( H),\,\,\,\xi\in\mathbb{Z}^n.
\end{equation}

The Fourier analysis on $L^2(\mathbb{T}^n,H)$ allow us to investigate the spectral and analytic properties of periodic pseudo-differential operators acting on $L^p(\mathbb{T}^n,H)$.  By the spectral properties of pseudo-differential operator we understand to classify them in  ideals of the algebra of  bounded operators and by analytical properties we refer to the problem of studying their mapping properties in (vector-valued and scalar-valued) $L^p$-spaces, Sobolev spaces, Besov spaces and anothers function spaces of interest in the analysis of PDE's.

In this paper we want to investigate some of the spectral properties of periodic pseudo-differential operators. To be precise, we would like to classify those vector-valued periodic pseudo-differential operators belonging to the ideal of $s$-nuclear, $0<s \leq 1,$ operators $\mathfrak{N}_{s}(L^{p_1}(\mathbb{T}^n,H),L^{p_2}(\mathbb{T}^n,H)).$ As an application of our classification, we study the Fredholm index of vector-valued Fourier multipliers and we consequently apply these index formulae to elliptic periodic pseudo-differential operators. Our main results will be presented in Subsection \ref{MainResults} as well as the notion of nuclearity and its relation with index formulae.  

The  literature for the analytical and spectral  properties in the vector-valued context need to be considered separately (in view of the cases $\dim H=\aleph_0$ or $\dim H=1)$). Indeed,
\begin{itemize}
    \item if $\dim H=1,$ then scalar-valued periodic pseudo-differential operators (periodic operators) have been investigated by several researchers over the past four decades. The  global quantisation of periodic operators using the Fourier transform, was introduced by  Volevich and Agranovich\cite{ag}  and further investigated in a broad context by Amosov \cite{Amosov}, Melo \cite{Melo}, McLean\cite{Mc}, Turunen and Vainikko\cite{tur} and, Ruzhansky and Turunen\cite{RT1,RT2,Ruz-2}. In these works, the authors developed the fundamental properties of the  pseudo-differential calculus  associated the with the H\"ormander classes as well as several applications to periodic PDE's and spectral theory. The generalisation of this approach to arbitrary  compact Lie groups can be found in the fundamental book \cite{Ruz} by Ruzhansky and Turunen. The analytical  and spectral properties for periodic operators have been treated in the work of Ruzhansky and Turunen \cite{Ruz-2,Ruz}, Delgado\cite{Profe},  Molahajloo and Wong\cite{s1,s2,m}, and  in the works of the authors \cite{Duvan2,Duvan3,Duvan4, CardonaBesov, CCK, Kumar, DasKumar, KS}. For the nuclearity of multilinear periodic operators in $L^p$-spaces we refer the reader to the works of the authors \cite{CardonaKumar20181,CardonaKumar20182} and to Delgado and Wong \cite{DW} for the linear case.
    \item If $\dim H=\aleph_0,$ the  references Amann \cite{Amann97}, Arendt and Bu \cite{Arendt0,Arendt}, Barraza, Gonzalez and Hern\'andez \cite{Bar}, Barraza,  Denk, Hern\'andez  and  Nau \cite{Bie}, Rabinovich \cite{Rabinovich}, Bu and Kim \cite{Bu,Bu2,Bu3} and Bu \cite{Bu4} have investigated the mapping properties of the vector-valued quantization in $L^p$-spaces and Besov spaces. Applications  to PDE's were studied by Denk and Nau \cite{Denk}, Keyantuo, Lizama, and  Poblete \cite{Keyantuo} and references therein. The classical aspects of the vector-valued Fourier analysis can be found in Bourgain \cite{Bou1}, Hieber \cite{Hieber} and K\"onig \cite{Konig}. For applications to probability theory we refer the reader to Bourgain \cite{Bou2}. 
\end{itemize}
Discrete vector-valued pseudo-differential operators (defined by \eqref{Discretevectorvalueddefinition}) will also be investigated in this work. For  the general properties of the symbolic calculus of discrete pseudo-differential operators on $\mathbb{Z}^n$ we refer to Botchway, Kibiti, and  Ruzhansky \cite{Botchway}. Also, the properties for these operators acting on $\ell^p(\mathbb{Z}^n)$ can be found in Molahajloo \cite{M}, Rodriguez \cite{rod} and the paper of the first author \cite{CardonaZn}. The nuclearity of Fourier integral operators on $\mathbb{R}^n,$ $\mathbb{Z}^n,$ compact Lie groups and compact homogeneous manifolds has been investigated in \cite{CardonaFIO}.

\subsection{Main results}\label{MainResults} In order to present our main result we need to fix some notations. First, we recall the definition of  $s$-nuclear operators on  Banach spaces due to Grothendieck \cite{Groth2, Groth1}. Let $E$ and $F$ be two Banach spaces. A compact linear operator $T:E \rightarrow F$ is called {\it $s$-nuclear}, $0<s \leq 1,$ if there exist sequences $\{g_n\}_n \subset E'$ and $\{h_n\}_n \subset F$ such that $ \sum_{n=1}^\infty \|g_n\|_{E'}^s\, \|h_n\|_{F}^s<\infty$ and for every $f \in E,$ we have \begin{equation}Tf= \sum_{n=1}^\infty \langle  g_n,f \rangle_{E',E} h_n.\end{equation}
	If $s=1$ we say that $T$ is a  nuclear operator. We denote by $\mathfrak{N}_s(E,F)$ the ideal of $s$-nuclear operators from $E$ to $F.$ The sequences $\{g_n\}_n \subset E'$ and $\{h_n\}_n \subset F$ are called a nuclear decomposition of $T.$ Grothendieck \cite{Groth2, Groth1} proved that the trace $\text{Tr}(T)$ is well defined for all nuclear operators if and only if the Banach space $E$ has the {\it approximation property}, i.e. for every compact set $K$ in $E$ and for every $\epsilon > 0$, there exists $F \in  \mathcal{F}(E)$ such that $\|x-Fx\|<\epsilon$, for all $x \in  K,$ where $\mathcal{F}(E)$ denotes the space of finite rank bounded linear operators
on $E$.  If $E=F$ is a Banach space satisfying the (Grothendieck) approximation property, we can associate to $T,$ a real number, called the nuclear trace of $T$ computed from a arbitrary nuclear decomposition of $T.$ Indeed, the nuclear trace of $T$ is defined by
	\begin{equation}
	    \textnormal{\bf{n-Tr(T)}}:=\sum_{n=1}^\infty \langle g_n, h_n \rangle_{E',E}.
	\end{equation} The condition that $E$ satisfies the Grothendieck approximation property implies that  $ \textnormal{\bf{n-Tr(T)}}$  is independent of the choice of the nuclear decomposition. Because we restrict our attention to separable Hilbert spaces, they satisfy the (Grothendieck) approximation property and consequently for all $1\leq p\leq \infty,$ $L^p(\mathbb{T}^n,H)$ has the approximation property (this is a particular situation of Proposition 7.11,  of Casazza \cite[p. 308]{Casazza}). If $E=F,$ and $T$ is a $s$-nuclear operator, $0<s\leq \frac{2}{3},$ it was shown by Grothendieck that the nuclear trace agrees with the spectral trace,
	\begin{equation}\label{Grothendieck}
	    \textnormal{\textbf{n-Tr}}(T)=\textnormal{\textbf{sp-Tr}}:=\sum_{n=1}^\infty \lambda_n(T).
	\end{equation}Here, $\lambda_n(T),$ $n\in\mathbb{N},$ denotes the sequence of eigenvalues of $T$ with multiplicities taken into account. The Grothendieck result \eqref{Grothendieck} will be useful in our further analysis for index theorems. Indeed, if $E$ is a Hilbert space and $T$ is a Fredholm operator (invertible modulo compact operators), in view of the celebrated McKean-Singer index formula
	\begin{equation}
	    \textnormal{\textbf{Ind}}(T)=\dim[ \textnormal{ker} T]- \dim[ \textnormal{ker} T^*]: =\textnormal{\textbf{sp-Tr}}(e^{-tT^*T})-\textnormal{\textbf{sp-Tr}}(e^{-tTT^*}),\,\,\,t>0,
	\end{equation} we  investigate the index of periodic pseudo-differential operators by using the notion of nuclear trace. 
	
	Now, we present our main results by starting with those regarding the nuclearity of periodic pseudo-differential  operators defined by  \eqref{Pseudotorus}.
	By using Delgado's theorem (Theorem \ref{DelTheorem2.8}) in Theorem \ref{chartn}
 we will prove that for $H=L^2(Y,\nu),$ and $0<s\leq 1,$ the operator $T_\sigma:L^p(\mathbb{T}^n, L^2(Y, \nu)) \rightarrow L^p(\mathbb{T}^n, L^2(Y, \nu)),$  $ 1 \leq p <\infty,$ extends to a $s$-nuclear operator, if and only if, the symbol $\sigma$ admits a decomposition of the form
 \begin{equation} \label{18vish'}
 \sigma(x,\xi)= e^{-2\pi i x \cdot \xi} \sum_{k=1}^\infty h_k \otimes \mathscr{F}(g_k) (x, - \xi)\textnormal{Id}_{L^2(Y,\nu)},\end{equation}
 where $\{g_k\}_k$ and $\{h_k\}_k$ are two sequences in $L^p(\mathbb{T}^n, L^2(Y, \nu))$ and  $L^{p'}(\mathbb{T}^n, L^2(Y, \nu)),$ where $p'$  the Lebesgue conjugate of $p,$ i.e., $\frac{1}{p}+\frac{1}{p'}=1,$ respectively such that $$\sum_{k=1}^\infty \|h_k\|^s_{L^{p'}(\mathbb{T}^n, L^2(Y, \nu))} \|g_k\|^s_{L^p(\mathbb{T}^n, L^2(Y, \nu))} <\infty,$$ and the convergence in \eqref{18vish'} will be understood in sense of the usual norm convergence  on $L^2(Y, \nu).$	
 
The presence of the exponential factor in  \eqref{18vish'} could suggest that the considered symbols do not satisfy the standard estimates of periodic H\"ormander classes. However, this equality cannot be used for extracting information of the derivatives a  symbol, if we focus our attention to smooth symbols, in turns we have not information about  the regularity of the functions $h_k$ and the discrete differences of the sequences $\mathscr{F}g_k,$ $k\in\mathbb{N}$. As in the case of scalar valued periodic operators, we only need information on the decaying of the symbol, in our case of the operator norm of the symbol $\Vert \sigma(x,\xi)\Vert_{\textnormal{op}}$ when $|\xi|\rightarrow \infty.$  Curiously, as a application of this characterisation, we will prove that for $1<p\leq 2,$     if $T_\sigma:L^{p}(\mathbb{T}^n,L^2(Y,\nu))\rightarrow L^{p}(\mathbb{T}^n,L^2(Y,\nu))$ is nuclear, then $\Vert a(x,\xi)\Vert_{\textnormal{op}}\in L^{p'}(\mathbb{T}^n\times \mathbb{Z}^n),$ this means that
 \begin{equation}
     \Vert \Vert  a(x,\xi)\Vert_{\textnormal{op}}\Vert_{ L^{p'}(\mathbb{T}^n\times \mathbb{Z}^n)}:=\left(\sum\limits_{\xi\in\mathbb{Z}^n} \int\limits_{\mathbb{T}^n}\Vert a(x,\xi)\Vert^{p'}_{\textnormal{op}}dx\right)^{\frac{1}{p'}}<\infty.
 \end{equation} 
 An analogue result can be proved for vector-valued  discrete operators. Indeed, if $\mathfrak{t}_a$ is formally defined by
 \begin{eqnarray}\label{Discretevectorvalueddefinition}
  \mathfrak{t}_af(x):=\int\limits_{\mathbb{T}^n}e^{i2\pi x\cdot \xi}\sigma(x,\xi)(\mathcal{F}_{\mathbb{Z}^n}f)(\xi)d\xi,\,\,
 \end{eqnarray} where $$\mathcal{F}_{\mathbb{Z}^n}f(\xi):=\sum_{x\in\mathbb{Z}^n}e^{-i2\pi x\cdot \xi }f(x),$$ is the discrete vector-valued Fourier transform defined on those discrete vector-valued functions $f:\mathbb{Z}^n\rightarrow H$ with compact support, we will prove in Theorem \ref{VishcharZn}, that $\mathfrak{t}_a:L^p(\mathbb{Z}^n, L^2(Y, \nu)) \rightarrow L^p(\mathbb{Z}^n, L^2(Y,\nu)), \, 1 \leq p < \infty$  is $s$-nuclear if and only if  
 \begin{equation} \label{18vish2'}
     a(x, \xi)= e^{-i2\pi x \cdot \xi} \sum_{k=1}^\infty (h_k \otimes \mathcal{F}_{\mathbb{Z}^n} g_k)(x, -\xi)\textnormal{Id}_{L^2(Y,\nu)},
 \end{equation}
 where $\{g_k\}_k$ and $\{h_k\}_k$ are two sequences in $L^p(\mathbb{Z}^n, L^2(Y, \nu))$ and  $L^{p'}(\mathbb{Z}^n, L^2(Y, \nu))$ respectively such that $$\sum_{k=1}^\infty \|h_k\|^s_{L^{p'}(\mathbb{Z}^n, L^2(Y, \nu))} \|g_k\|^s_{L^p(\mathbb{Z}^n, L^2(Y, \nu))} <\infty,$$ and the convergence in \eqref{18vish2'} will be understood in the sense of the usual norm convergence on $L^2(Y, \nu).$ Newly, as application of this discrete characterisation,   for $1<p\leq 2,$     if $\mathfrak{t}_\sigma:L^{p}(\mathbb{Z}^n,L^2(Y,\nu))\rightarrow L^{p}(\mathbb{Z}^n,L^2(Y,\nu))$ is nuclear, then $\Vert a(x,\xi)\Vert_{\textnormal{op}}\in L^{p'}(\mathbb{Z}^n\times \mathbb{T}^n),$ this means that
 \begin{equation}
     \Vert \Vert  a(x,\xi)\Vert_{\textnormal{op}}\Vert_{ L^{p'}(\mathbb{Z}^n\times \mathbb{T}^n),}^{p'}:= \int\limits_{\mathbb{T}^n}\sum\limits_{\xi\in\mathbb{Z}^n}\Vert a(x,\xi)\Vert^{p'}_{\textnormal{op}}dx<\infty.
 \end{equation} 
 
 In the general case, we provide sufficient conditions for the nuclearity of pseudo-differential operators. In this general setting we find criteria by fixing an arbitrary orthonormal basis  $$B=\{e_\alpha:\alpha \in I\},$$  of the Hilbert space $H.$ So, in Theorem \ref{TheoremNuclearSufficientCondition}, we prove that 
if $A:C^\infty(\mathbb{T}^n,H)\rightarrow C^\infty(\mathbb{T}^n,H)$ is a vector-valued periodic pseudo-differential operator with operator-valued symbol $\sigma:\mathbb{T}^n\times \mathbb{Z}^n\rightarrow \mathcal{L}(H),$ and   $1\leq p_1,p_2< \infty,$  under the summability condition
\begin{equation}\label{Hiponsymbol'}
\Vert\{ \Vert \sigma(x,\xi)e_\alpha\Vert^s_{L^{p_2}(\mathbb{T}^n_x,H)}  \}_{(\xi,\alpha)} \Vert_{\ell^1(\mathbb{Z}^n\times I)}:=    \sum_{\xi\in\mathbb{Z}^n}\sum_{\alpha\in I}\Vert \sigma(x,\xi)e_\alpha\Vert^s_{L^{p_2}(\mathbb{T}^n_x,H)}<\infty,
\end{equation}    the operator $A:L^{p_1}(\mathbb{T}^n,H)\rightarrow L^{p_2}(\mathbb{T}^n,H)$ extends to a $s$-nuclear operator. If $1\leq p<\infty$ and $p=p_1=p_2,$ in terms of the basis $B,$ the nuclear trace of $A$ is given by
\begin{equation}\label{nTraceformula1'}
    \textnormal{\textbf{n-Tr}}(A)=\sum_{\xi\in\mathbb{Z}^n}\sum_{\alpha\in I}\int\limits_{\mathbb{T}^n}\langle \sigma(x,\xi)e_\alpha,e_\alpha\rangle_{H}dx. 
\end{equation} In particular, if $A$ is a Fourier multiplier, the symbol $\sigma(\xi)$ belongs to the Schatten-von Neumann class $\mathfrak{S}_{s}(H)$ of order $s,$ and
\begin{equation}\label{nucleartracemultiplier'}
    \textnormal{\textbf{n-Tr}}(A)=\sum_{\xi\in\mathbb{Z}^n}\textnormal{\textbf{n-Tr}}(\sigma(\xi)).
\end{equation} 

However, in the general case, if we assume that the operators $\sigma(x,\xi),$ $\xi\in\mathbb{Z}^n,$ are simultaneously positive operators, the trace formula  \eqref{nTraceformula1'} is independent of the basis $B$ under consideration and 
\begin{equation}\label{nTraceformula1''}
    \textnormal{\textbf{n-Tr}}(A)=\sum_{\xi\in\mathbb{Z}^n}\int\limits_{\mathbb{T}^n}\textnormal{\textbf{n-Tr}}(\sigma(x,\xi))dx.
\end{equation}
As an application of these trace formulae,  every Fourier multiplier $A\equiv T_\sigma:L^2(\mathbb{T}^n,H)\rightarrow L^2(\mathbb{T}^n,H)$, (closed and densely defined) admitting a Fredholm extension and under certain conditions, has an analytical index satisfying the index formula
\begin{equation}\label{IndexFormulaNov2018'}
    \textnormal{\textbf{Ind}}(A)=\sum_{\eta\in \mathbb{Z}^n}\textnormal{\textbf{Ind}}(\sigma(\eta)),
\end{equation} which in turns implies that $\textnormal{\textbf{Ind}}(\sigma(\eta))=0$ except at finitely many integers $\eta.$
This will be proved in Theorem \ref{Indextheorem1}. Finally, as a consequence of this index theorem we study the index of certain periodic  elliptic operators (see Theorem \ref{periodicindex2018}).

\section{Preliminaries}
\subsection{Vector-valued Fourier transform and Fourier multipliers}
 If $f:\mathbb{T}^n\rightarrow H$ is a smooth function then its Fourier transform at $\eta\in\mathbb{Z}^n$ is defined by
\begin{equation}
(\mathscr{F}f)(\eta)\equiv \widehat{f}(\eta)=\int\limits_{\mathbb{T}^n}e^{-i2\pi x\cdot\eta}f(x)dx\in H,
\end{equation}
then a linear operator $A:C^{\infty}(\mathbb{T}^n, H)\rightarrow C^{\infty}(\mathbb{T}^n, H)$ is a Fourier multiplier if
\begin{eqnarray}\label{fourier}
(\mathscr{F}(Af))(\eta)\equiv \widehat{Af}(\eta)=\sigma(\eta)\widehat{f}(\eta),
\end{eqnarray}
for some operator-valued function $\sigma:\mathbb{Z}^n\rightarrow \mathcal{L}(H)$ from $\mathbb{Z}^n$ into the algebra of bounded linear operators on $H.$ 
In this case, $A$ can be formally written as
\begin{equation}\label{multiplier}
Af(x)\equiv T_\sigma f(x)=\sum_{\eta\in\mathbb{Z}^n}e^{i2\pi x\cdot\eta}\sigma(\eta)\widehat{f}(\eta),
\end{equation}
and the operator-valued function $\sigma$ is called the global symbol of the operator $A.$ There exists a one to one correspondence between a symbol (i.e, a function from $\mathbb{Z}^n$ into $\mathcal{L}(H)$) and its corresponding operator $A$ defined by  \eqref{multiplier}. This correspondence is the consequence of following formula
\begin{equation}\label{symbol}
\sigma(\xi)v=e_{-\xi}A(e_\eta\otimes  v), \,\,\,e_\xi(x):=e^{i2\pi x\cdot\xi},\,\, x\in\mathbb{T}^n,\,\xi\in\mathbb{Z}^n,\,\,\,v\in H,
\end{equation} which can be derived if, in \eqref{multiplier}, we replace $f$ by the function  $x \mapsto (e_{\xi} \otimes v)(x):= e^{i2\pi x\cdot \xi}v$. If $A\equiv T_\sigma:L^2(\mathbb{T}^n,H)\rightarrow L^2(\mathbb{T}^n,H)$ and $S\equiv T_\tau:L^2(\mathbb{T}^n,H)\rightarrow L^2(\mathbb{T}^n,H)$ are bounded (Fourier multipliers) on $L^2(\mathbb{T}^n,H),$  the properties of the vector-valued  Fourier transform implies that the operator $T_{a}:=A\circ S$ is a bounded (Fourier multiplier) on $L^2(\mathbb{T}^n,H),$ with symbol defined by
\begin{equation}\label{Fouriercomposition}
 a:= (\eta\mapsto   a(\eta):=\sigma(\eta)\circ \tau (\eta)):\mathbb{Z}^n\rightarrow \mathcal{L}(H).
\end{equation}
For applications, we also consider Fourier multipliers $A$ with   operator-valued symbols admitting unbounded values, this means that  for some $\eta,$ the (densely defined) operator $\sigma(\eta)$ is unbounded  on $H.$ We also require in this case a framework for the composition of these operators. For this, throughout the paper we fix an orthonormal  basis $B=\{e_\alpha:\alpha\in I \}$  of the Hilbert space $H$, we develop our analysis in terms of a such basis, but, later we will show that some geometric properties of the corresponding operators are independent of the considered basis $B$.

The subspace $ L^2_{\textnormal{fin}\,B}
(\mathbb{T}^n,H)
$ of $ L^2(\mathbb{T}^n,H)
$ is the subspace  consisting of the finite linear combinations of functions of the form $\exp_\xi(\cdot)\otimes e_{\alpha},$ $\exp_\xi(x):=e^{i2\pi x\cdot \xi},$ $x\in\mathbb{T}^n.$ If $A:L^2_{\textnormal{fin}\,B}(\mathbb{T}^n,H)\rightarrow L^2(\mathbb{T}^n,H)$ and $S:L^2_{\textnormal{fin}\,B}(\mathbb{T}^n,H)\rightarrow L^2(\mathbb{T}^n,H)$ have symbols admitting unbounded operator-valued values, but,  \begin{equation}\label{conditionsobre B}
    B\subset  \bigcap_{\eta\in\mathbb{Z}^n} \textnormal{Dom}(\sigma(\eta))\cap \textnormal{Dom}(\tau(\eta)),\,\,\textnormal{Rank}(\tau(\eta))\subset \textnormal{Dom}(\sigma(\eta)),\,\,\eta\in\mathbb{Z}^n,
\end{equation} then the composition operator $T_a=A\circ S$ is well defined from $L^2_{\textnormal{fin}\,B}(\mathbb{T}^n,H)$ into  $ L^2(\mathbb{T}^n,H)$. In this case, the symbol  of $T_a,$ $a$ is given by $a(\eta)=\sigma(\eta)\tau(\eta),$ $\eta\in\mathbb{Z}^n.$ Observe that for every $\eta,$ $a(\eta)$ is a (well-defined) densely defined operator  on $H.$ In the following lemma we summarise the previous analysis, some known results (that we present here for completeness), and some  spectral properties of Fourier multipliers. 

\begin{lemma}\label{elpapadeloslemma} Let $\sigma$ and $\tau$ be two  operator-valued symbols from $\mathbb{Z}^n$ into $H$. Then,
\begin{enumerate}
    \item if $A\equiv T_\sigma:L^2(\mathbb{T}^n,H)\rightarrow L^2(\mathbb{T}^n,H)$ and $S\equiv T_\tau:L^2(\mathbb{T}^n,H)\rightarrow L^2(\mathbb{T}^n,H)$ are bounded (Fourier multipliers) on $L^2(\mathbb{T}^n,H),$ then the (Fourier multiplier) operator $T_{a}:=A\circ S$ is a bounded operator on $L^2(\mathbb{T}^n,H)$ with symbol defined by \eqref{Fouriercomposition}.
    \item If $A\equiv T_\sigma:L^2_{\textnormal{fin}\,B}(\mathbb{T}^n,H)\rightarrow L^2(\mathbb{T}^n,H)$ and $S\equiv T_\tau:L^2_{\textnormal{fin}\,B}(\mathbb{T}^n,H)\rightarrow L^2(\mathbb{T}^n,H)$ have symbols satisfying \eqref{conditionsobre B}, then the periodic operator $T_a:L^2_{\textnormal{fin}\,B}(\mathbb{T}^n,H)\rightarrow L^2(\mathbb{T}^n,H)$ defined by $T_{a}:=A\circ S$ on $L^2_{\textnormal{fin}\,B}(\mathbb{T}^n,H)$  has symbol defined by \eqref{Fouriercomposition}.
    \item If $A\equiv T_\sigma:L^2_{\textnormal{fin}\,B}(\mathbb{T}^n,H)\rightarrow L^2(\mathbb{T}^n,H)$ is a Fourier multiplier and $\lambda\in \textnormal{Resolv}(A)$ (the resolvent set of $A$), then $\lambda-A=T_{\lambda-a},$ $\lambda\in \textnormal{Resolv}(\sigma(\eta))$ for all $\eta\in \mathbb{Z}^n,$ and the symbol of the resolvent operator $(\lambda-A)^{-1}$ is defined by
    \begin{eqnarray}
    R_\lambda(\sigma):=(\eta\mapsto  R_\lambda(\sigma(\eta)):(\lambda-\sigma(\eta))^{-1}): H\rightarrow H.
    \end{eqnarray}
    \item  The spectrum and the resolvent of $A:L^2_{\textnormal{fin}\,B}(\mathbb{T}^n,H)\subset L^2(\mathbb{T}^n,H) \rightarrow L^2(\mathbb{T}^n,H)$ are sets given by
    \begin{equation}
       \textnormal{Spect}(A)=\bigcup_{\eta\in\mathbb{Z}^n} \textnormal{Spect}(\sigma(\eta)),\,\,\,\,\,\textnormal{Resolv}(A)=\bigcap_{\eta\in\mathbb{Z}^n} \textnormal{Resolv}(\sigma(\eta)).
    \end{equation}
    \item If $A:L^2_{\textnormal{fin}\,B}(\mathbb{T}^n,H)\subset L^2(\mathbb{T}^n,H) \rightarrow L^2(\mathbb{T}^n,H)$ is a Fourier multiplier, the formal adjoint of $A,$ $A^*$ is a Fourier multiplier with symbol $\sigma^*$ defined  by 
    \begin{eqnarray}
    \sigma^*:=(\eta\mapsto \sigma^*(\eta):=\sigma(\eta)^*): H\rightarrow H,
    \end{eqnarray} where $\sigma^*(\eta)$ is the adjoint operator of the (closable) operator $\sigma(\eta)$. 
    \item Let us consider the heat
     semigroups $\{e^{-t\Delta_a}\}$ and $\{e^{-t\Delta_b}\}$  defined by the functional calculus associated with the Laplacians   $\Delta_a=A^*A$ and $\Delta_b=AA^*,$ where $A:L^2_{\textnormal{fin}\,B}(\mathbb{T}^n,H)\subset L^2(\mathbb{T}^n,H) \rightarrow L^2(\mathbb{T}^n,H),$ is a Fourier multiplier. Then, for all $t>0,$ $T_{a(t)}:=e^{-t\Delta_a}$ and $T_{b(t)}:=e^{-t\Delta_b}$ are Fourier multipliers with operator-valued symbols defined by
     \begin{equation}
         a(t):=(\eta\mapsto a(t)(\eta):=e^{-t\sigma(\eta)^*\sigma(\eta)}): H\rightarrow H,
     \end{equation} 
     \begin{equation}
         b(t):=(\eta\mapsto b(t)(\eta):=e^{-t\sigma(\eta)^*\sigma(\eta)}): H\rightarrow H.
     \end{equation}
\end{enumerate}
  \end{lemma}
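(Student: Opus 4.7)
The plan is to derive every item from the defining identity \eqref{symbol} together with the diagonal decomposition $L^2(\mathbb{T}^n,H)\simeq \bigoplus_{\eta\in\mathbb{Z}^n}(\exp_\eta\otimes H)$ in which any Fourier multiplier acts fiberwise. The common template is: evaluate the operator under consideration on a pure mode $\exp_\eta\otimes v$ with $v\in H$ (or $v\in B$ when domains are an issue), read off what new symbol appears, and then appeal to uniqueness of the symbol via \eqref{symbol}.

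For items (1) and (2), first I apply $S$ to $\exp_\eta\otimes v$ to obtain $\exp_\eta\otimes \tau(\eta)v$; then applying $A$ yields $\exp_\eta\otimes \sigma(\eta)\tau(\eta)v$. In the bounded case the composition is automatically defined on all of $L^2$; in the unbounded case the containment \eqref{conditionsobre B} is exactly what is required so that $\tau(\eta)v$ lies in $\textnormal{Dom}(\sigma(\eta))$ for every $v\in B$, making $A\circ S$ well defined on $L^2_{fin\,B}$. For item (5), I test with pure modes $f=\exp_\eta\otimes u$, $g=\exp_\xi\otimes v$ and compute
\begin{equation}
\langle Af,g\rangle_{L^2(\mathbb{T}^n,H)}=\delta_{\eta\xi}\langle \sigma(\eta)u,v\rangle_{H},\qquad \langle f,A^{*}g\rangle_{L^2(\mathbb{T}^n,H)}=\delta_{\eta\xi}\langle u,\sigma(\eta)^{*}v\rangle_{H},
\end{equation}
so $A^{*}$ is the multiplier with symbol $\sigma(\eta)^{*}$ on $L^{2}_{fin\,B}$, provided of course $B\subset \textnormal{Dom}(\sigma(\eta)^{*})$ for every $\eta$.

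Items (3) and (4) follow in tandem. If $\lambda\in \textnormal{Resolv}(A)$, applying $(\lambda-A)^{-1}$ to $\exp_\eta\otimes v$ and reading off the symbol via \eqref{symbol} forces $(\lambda-\sigma(\eta))^{-1}$ to exist for every $\eta$, which gives $\textnormal{Resolv}(A)\subset \bigcap_\eta \textnormal{Resolv}(\sigma(\eta))$; conversely, if $\lambda\in\bigcap_\eta\textnormal{Resolv}(\sigma(\eta))$, the fiberwise inverse defines a Fourier multiplier on $L^{2}_{fin\,B}$ with symbol $R_\lambda(\sigma(\eta))$, and by part (2) composing with $\lambda-A$ recovers the identity on $L^{2}_{fin\,B}$. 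Taking complements yields the spectrum formula in (4). Finally (6) combines (1) and (5): $\Delta_{a}=A^{*}A$ and $\Delta_{b}=AA^{*}$ are Fourier multipliers with nonnegative self-adjoint fiber symbols $\sigma(\eta)^{*}\sigma(\eta)$ and $\sigma(\eta)\sigma(\eta)^{*}$ (the second definition in the statement appears to be a typographical duplicate of the first), so the spectral functional calculus commutes with the fiberwise decomposition and $e^{-t\Delta_{a}}$, $e^{-t\Delta_{b}}$ are Fourier multipliers with the exponentiated fiber symbols.

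The main obstacle lies in items (3)--(4): one must check that the formally constructed fiberwise inverse, a priori only defined on $L^{2}_{fin\,B}$, extends to a bounded operator on the whole of $L^{2}(\mathbb{T}^{n},H)$, which in the unbounded-symbol regime requires the uniform bound $\sup_{\eta\in\mathbb{Z}^{n}}\Vert R_\lambda(\sigma(\eta))\Vert_{\textnormal{op}}<\infty$. Without such uniformity the identity $\textnormal{Spect}(A)=\bigcup_{\eta}\textnormal{Spect}(\sigma(\eta))$ must be interpreted modulo closure of the union; this technical caveat is worth flagging explicitly in the written proof, but otherwise the whole lemma reduces to the tautological statement that Fourier multipliers inherit all algebraic, adjoint and spectral operations of their fibers.
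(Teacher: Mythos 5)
Your approach is essentially the one the paper uses: decompose into Fourier modes, act fiberwise, and appeal to uniqueness of the symbol via \eqref{symbol} — the paper tests with Plancherel instead of pure modes for item (5), but this is the same argument. Two observations you raise are worth flagging explicitly because the paper's proof simply says ``can be deduced from (3)'' without addressing them: the spectrum identity in item (4) is, as you note, only correct with the union replaced by its closure (resp.\ requiring the uniform bound $\sup_\eta\Vert(\lambda-\sigma(\eta))^{-1}\Vert_{\mathrm{op}}<\infty$ to place $\lambda$ in $\mathrm{Resolv}(A)$), a subtlety the paper elides; and the displayed definition of $b(t)$ in item (6) is indeed a typographical duplicate of $a(t)$ — it should read $b(t)(\eta)=e^{-t\sigma(\eta)\sigma(\eta)^{*}}$.
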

  \begin{proof} 
The parts (1), (2) and (6)    can be proved by straightforward computation. In order to prove (3), we observe that $\lambda\in \textnormal{Resolv}(A)$ if and only $\lambda-A$ is an invertible operator on $L^2(\mathbb{T}^n,H)$ such that $\lambda-A$ and its inverse, $(\lambda-A)^{-1},$ are bounded operators. From the equality
$$ (\lambda-A)(\lambda-A)^{-1}=(\lambda-A)^{-1}(\lambda-A)=Id_{H}, $$ a standard argument via the Fourier transform allows us to conclude that, for every $\xi,$ $\lambda-\sigma(\xi)$ is a bounded and invertible operator on $H$ and the symbol of $(\lambda-A)^{-1}$ is defined by the sequence of operators $\{(\lambda-\sigma(\xi))^{-1}\}_{\xi\in \mathbb{Z}^n}.$ The property $(4)$ can be deduced from $(3).$ In order to prove $(5),$ let us observe that from the equality
\begin{eqnarray}
 \langle Af,g \rangle_{L^2(\mathbb{T}^n,H)}=\langle f,A^*g \rangle_{L^2(\mathbb{T}^n,H)},
\end{eqnarray}we deduce, using Plancherel theorem, 
\begin{eqnarray}
 \langle \sigma(\xi)\widehat{f},\widehat{g} \rangle_{\ell^2(\mathbb{Z}^n,H)}=\langle \widehat{f},\widehat{A^*g} \rangle_{\ell^2(\mathbb{Z}^n,H)}.
\end{eqnarray} Consequently, we have that $ \langle \widehat{f},\,\sigma(\xi)^*\widehat{g} \rangle_{\ell^2(\mathbb{Z}^n,H)}=\langle \widehat{f},\widehat{A^*g} \rangle_{\ell^2(\mathbb{Z}^n,H)}.$ So, $\sigma(\xi)^*\widehat{g}=\widehat{A^*g}.$ This shows that $A^*$ is a vector-valued Fourier multiplier and that its symbol is defined by the sequence $\{\sigma(\xi)^*\}_{\xi\in\mathbb{Z}^n}.$ So, we finish the proof. 
  \end{proof}
\subsection{Basics of tensor products and nuclear operators}
  We will present some basic concepts develped in \cite{Delgado} related to trace of a nuclear  operator and tensor products to make our paper self contained.
  
Let $E$ be a Banach space and let $\mathcal{L}(E)$ be the algebra of all bounded linear operator on $E.$ Let $\mathfrak{F}(E)$ denoted the ideal of $\mathcal{L}(E)$ consisting of finite rank operators from $E$ into $E.$ It is well known that every $T \in \mathfrak{F}(E)$ can be represented as $T= \sum_{i=1}^n f_i \otimes x_i$ where $f_i \in E'$ and $x_i \in E,$ $i =1,2, \ldots, n.$ We note that this representation is not unique.   The trace of a finite rank operator $T$ is given by 
\begin{equation}
\textnormal{\textbf{Tr}}(T):= \sum_{i=1}^n \langle f_i, x_i \rangle_{E',E}.\end{equation}  It can be easily seen that Trace is independent of the choice of the representation of $T.$ If $E$ has an approximation property, $\mathfrak{F}(E)$ is dense in $\mathfrak{N}(E).$ Also, every $T \in \mathfrak{N}(E)$ can be written as $T= \sum_{i=1}^\infty f_i \otimes x_i,$ where $f \in E',\, x \in E.$ Then the (nuclear) trace of a $T \in \mathfrak{N}(E)$ which is independent of the choice of the representation of $T$ is given by 
\begin{equation}\textnormal{\textbf{n-Tr}}(T) = \sum_{i=1}^\infty \langle f_i, x_i \rangle_{E', E}.\end{equation}
The nuclear trace also appear in the more general context of $s$-nuclear operators, $0<s\leq 1,$ as we have seen in the introduction.
Since in this paper we are dealing with vector valued $L^p$-spaces, we will study particularly the Banach space $E= L^p( \Omega, \mu, L^2(Y, \nu)),$  of all $L^2(Y, \nu)$-valued Bochner measurable functions such that $\|f(\cdot)\|_{L^2(Y,\nu)} \in L^p(\Omega, \mu), \, 1 \leq p <\infty,$ where $\mu$ and $\nu$ are $\sigma$-finite measures on $\Omega$ and $Y$ respectively.  We also assume that $H=L^2(Y, \nu)$ is a separable Hilbert space. 

We define the tensor product $f \otimes g$ of $f \in L^p(\mu, L^2(Y, \nu))$ and $g \in L^{p'}(\mu, L^2(Y, \nu))$ with $\frac{1}{p}+\frac{1}{p'}=1$ as follows: for $(x,y) \in \Omega \times \Omega$ and $(z,w) \in Y \times Y,$ we set 
   $$[(f\otimes g)(x,y)](z,y)=: f(x)(z) \,g(y)(w),$$
where we have denoted  $$g(y,w)=g(y)(w)\textnormal{   and   }f(x,z)=:f(x)(z).$$ Observe that for $a.e.w.\,\,(x,y)\in \Omega\times \Omega,$ the function $(f\otimes g)(x,y)\in L^2(\nu\otimes \nu, Y\times Y).$ Indeed,
\begin{eqnarray}
 \Vert (f\otimes g)(x,y) \Vert_{ L^2(\nu\otimes \nu, Y\times Y) }=\Vert f(x,\cdot) \Vert_{L^2(Y,\nu)} \Vert g(y,\cdot) \Vert_{L^2(Y,\nu)}. \end{eqnarray}
 As a consequence of the H\"older Inequality, we have $x\mapsto f\otimes g(x,x)\in L^2(Y,\nu)$ and consequently,
 \begin{eqnarray}
  \Vert f\otimes g(x,x) \Vert_{L^2(Y,\nu)}\leq \Vert f\Vert_{L^p(\Omega,L^2(Y,\nu))}\Vert g\Vert_{L^{p'}(\Omega,L^2(Y,\nu))}.
 \end{eqnarray}
 The inner product on $L^2(Y,\nu)$ induces to a duality $\langle\cdot \,, \cdot\rangle_{p,p'}$ between the spaces $L^p(\Omega,L^2(Y,\nu))$ and $L^{p'}(\Omega,L^2(Y,\nu))$  for $1/p+1/{p'}=1,$ which is defined by
 \begin{eqnarray}
  \langle f, g\rangle_{p,p'}:=\int\limits_{\Omega}\langle f(x,\cdot),\overline{g(x,\cdot)}\rangle_{L^2(Y,\nu)}d\mu(x).
 \end{eqnarray}
 This duality, allows us to understand the tensor product $f \otimes g$ as an integral operator. Indeed, it was noted in \cite[Lemma 2.6]{Delgado} that $f \otimes g$ defines an integral operator on $L^p(\mu, L^2(Y, \nu)).$ In fact, for any $h \in L^p(\mu , L^2(Y, \nu))$ we have 
$$ (f \otimes g)(h)(x)= \int\limits_\Omega[ K_{f \otimes g}(x,y)] h(y) d\mu(y) ,$$ where  $K_{f \otimes g}(x,y)$ denotes the integral kernel of the integral operator $f \otimes g$ and given by $K_{f \otimes g}(x,y)=(f \otimes g) (x,y).$ Here,
\begin{eqnarray}
 [ K_{f \otimes g}(x,y)] h(y):=g(x,\cdot)\int\limits_{Y}f(y,z)h(y,z)d\nu(z).
\end{eqnarray}
It can be checked that 
\begin{eqnarray}
 (f \otimes g)(h)(x)=\langle h,f \rangle_{p,p'}g(x,\cdot).
\end{eqnarray}
Lemma 2.7 in Delgado \cite{Delgado} establish that the trace of the tensor product operator $f \otimes g$ is given by 
$$ \textnormal{\textbf{Tr}}(f \otimes g)= \int\limits_\Omega \textnormal{\textbf{Tr}}(K_{f \otimes g}(x,x))\, d\mu(x).$$ The following theorem of Delgado \cite[Theorem 2.8]{Delgado} presents a  characterisation of $s$-nuclear, $0<s \leq 1,$ operators on $L^p(\mu, L^2(Y, \nu)).$ 
\begin{theorem} \label{DelTheorem2.8}
An operator $T: L^p(\mu, L^2(Y, \nu)) \rightarrow L^p(\mu, L^2(Y, \nu))$ is a $s$-nuclear, $0<s\leq 1, $ operator if and only if there exist two sequences $(f_n)_{n\in\mathbb{N}} \subset L^p(\mu, L^2(Y, \nu))$ and $(g_n)_{n\in \mathbb{N}} \subset L^{p'}(\mu, L^2(Y, \nu))$ such that $$\sum_{n=1}^\infty \|f_n\|^s_{L^p(\mu, L^2(Y, \nu))} \|g_n\|^s_{L^{p'}(\mu, L^2(Y, \nu))}< \infty$$ such that for all $f \in L^p(\mu, L^2(Y, \nu))$ 
$$Tf(x)= \int\limits_\Omega \left(\sum_{n=1}^\infty (f_n \otimes g_n)(x,y) \right) f(y) \, d\mu(y)\,\,\,\,\,\, a.e. \, x.$$ 
Consequently, if $T$ is a $s$-nuclear operator on $L^p(\mu, L^2(Y, \nu))$ then $T$ posses a kernel $$K(x,y) = \sum_{n=1}^\infty (f_n \otimes g_n)(x,y)$$ with $f_n,\, g_n$ as above.
\end{theorem}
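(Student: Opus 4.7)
The plan is to derive the theorem directly from the abstract definition of $s$-nuclearity on the concrete Banach space $E = L^p(\mu, L^2(Y, \nu))$, after explicitly identifying its dual and realising each rank-one summand in a nuclear decomposition as an integral operator of the form discussed in the preliminaries.

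The first step is the duality $E' \cong L^q(\mu, L^2(Y, \nu))$ with $1/p+1/q=1$, which holds because $L^2(Y,\nu)$ is a Hilbert space (hence reflexive and enjoys the Radon--Nikodym property), and which is realised by the pairing $\langle \cdot, \cdot \rangle_{p,q}$ already recalled in the preliminaries. Under this identification, the definition of $s$-nuclearity for $T: E \to E$ translates verbatim to the existence of sequences $\{f_n\}\subset L^p(\mu, L^2(Y,\nu))$ and $\{g_n\}\subset L^q(\mu, L^2(Y,\nu))$ with $\sum_n \|f_n\|_{L^p}^s\|g_n\|_{L^q}^s<\infty$ such that
\begin{equation*}
Tf = \sum_{n=1}^{\infty} \langle g_n, f\rangle_{p,q}\, f_n, \qquad f \in E.
\end{equation*}

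The key computation is the one already recorded in the preliminaries, namely that the rank-one operator $f \mapsto \langle g_n, f\rangle_{p,q}\, f_n$ coincides with the integral operator with kernel $(x,y)\mapsto(f_n\otimes g_n)(x,y)$. Summing over $n$ and interchanging the sum with the Bochner integral then yields the kernel representation $Tf(x)=\int_\Omega\bigl(\sum_n(f_n\otimes g_n)(x,y)\bigr) f(y)\,d\mu(y)$ for a.e.~$x$, which is the ($\Rightarrow$) direction. The ($\Leftarrow$) direction is the same chain of equalities read in reverse: from the kernel representation one recovers the rank-one decomposition termwise and, combined with the duality identification, produces an $s$-nuclear decomposition with the same summability control.

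The main obstacle is the justification of the interchange of the infinite sum and the Bochner integral. The estimate $\|(f_n\otimes g_n)(x,y)\|_{L^2(\nu\otimes\nu,Y\times Y)} = \|f_n(x,\cdot)\|_{L^2(Y,\nu)} \|g_n(y,\cdot)\|_{L^2(Y,\nu)}$ from the preliminaries, combined with H\"older's inequality, gives the bound $\int_\Omega \|(f_n\otimes g_n)(x,y)\,f(y)\|_{L^2(Y,\nu)} d\mu(y) \le \|f_n\|_{L^p}\|g_n\|_{L^q}\|f\|_{L^p}$. Since the $s$-summability of $\|f_n\|_{L^p}^s\|g_n\|_{L^q}^s$ with $0<s\le 1$ forces $\|f_n\|_{L^p}\|g_n\|_{L^q}\to 0$, eventually the terms satisfy $\|f_n\|_{L^p}\|g_n\|_{L^q}\le\|f_n\|_{L^p}^s\|g_n\|_{L^q}^s$, yielding ordinary $\ell^1$-summability of the majorant; Fubini and dominated convergence then permit the required interchange and complete the equivalence.
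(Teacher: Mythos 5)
The paper does not prove this theorem; it is quoted as a cited result of Delgado (the reference \cite[Theorem 2.8]{Delgado} in the preliminaries), so there is no in-paper argument to compare against. Your reconstruction is, however, a sound route and essentially the one used in Delgado's original work: identify $E' \cong L^q(\mu, L^2(Y,\nu))$ via the Radon--Nikodym property of the Hilbert space $L^2(Y,\nu)$, read the $s$-nuclear decomposition $Tf=\sum_n\langle g_n,f\rangle f_n$ under this identification, and recognise each rank-one summand as the integral operator with operator-valued kernel $(f_n\otimes g_n)(x,y)$ (the paper's Lemma 2.6 of Delgado). Your observation that $\sum_n a_n^s<\infty$ with $0<s\le 1$ forces $a_n\to 0$ and hence $\sum_n a_n<\infty$ is precisely the right device for producing the integrable majorant needed to interchange the sum with the Bochner integral. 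One point worth spelling out to make the forward direction airtight: before writing $Tf(x)=\int_\Omega\bigl(\sum_n(f_n\otimes g_n)(x,y)\bigr)f(y)\,d\mu(y)$ one should verify that the kernel series converges a.e.\ in $L^2(\nu\otimes\nu)$, which follows from the pointwise identity $\|(f_n\otimes g_n)(x,y)\|_{L^2(\nu\otimes\nu)}=\|f_n(x,\cdot)\|_{L^2(Y,\nu)}\|g_n(y,\cdot)\|_{L^2(Y,\nu)}$ together with a Tonelli argument using the $\ell^1$-majorant you constructed; this is a detail rather than a gap, and your appeal to Fubini/dominated convergence is pointing at exactly this step. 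The reverse direction, as you say, is obtained by reading the same chain backward. Overall the plan is correct.
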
 

\section{Nuclear vector-valued periodic and discrete pseudo-differential  operators} \label{nuche}

In this section we give a  characterisation of periodic and discrete pseudo-differential operators acting on the vector-valued $L^p$-spaces $L^p(\mathbb{T}^n, L^2(Y, \nu))$ and $L^p(\mathbb{Z}^n, L^2(Y, \nu))$  respectively.  To accomplish this objective, for $h\in L^{p'}(\mathbb{T}^n,L^2(Y,\nu))$ and $g\in L^p(\mathbb{T}^n,L^2(Y,\nu)),$ we define the function 
\begin{eqnarray}
 h\otimes \mathscr{F}(g)(x,\xi)=h(x)\mathscr{F}(g)(\xi),\,\,(x,\xi)\in \mathbb{T}^n\times \mathbb{Z}^n.
\end{eqnarray}Here, we keep the notation $h(x)=h(x,\cdot),$ $g(x)=g(x,\cdot),$ and we use,
\begin{eqnarray}
 \mathscr{F}(g)(\xi)=\int \limits_{\mathbb{T}^n}e^{-i2\pi x\cdot \xi}g(x)dx,
\end{eqnarray}
where the integral is understood in the Bochner sense. Note that we can also think $h \otimes \mathscr{F}(g)$ as an operator in the view of the fact that $h \otimes g$ can be viewed as an integral operator.  In this section, every   pseudo-differential operator \begin{eqnarray}  \label{Pseudotorus2Y}
Af(x)\equiv T_\sigma f(x):=\sum_{\xi\in\mathbb{Z}^n}e^{i2\pi x\cdot \xi}\sigma(x,\xi)\widehat{f}(\xi), \,\,x:=(x_1,x_2,\cdots ,x_n)\in \mathbb{T}^n,
\end{eqnarray} for all $f\in C^{\infty}(\mathbb{T}^n,H), $  will be considered admitting a bounded extension on  $L^p(\mathbb{T}^n,L^2(Y)),$ and for every $(x,\xi),$ we will assume that 
\begin{equation*}
    \sigma(x,\xi):L^2(Y)\rightarrow L^2(Y),
\end{equation*} extends to a bounded linear operator. In the case $1<p<\infty,$ these kind of operators arise from the Marcinkiewicz type condition
\begin{equation*}
    \Vert \partial_{x}^\alpha\Delta_{\xi}^\beta a(x,\xi) \Vert_{L^2(Y)}\leq (1+|\xi|)^{-|\beta|},\,\,\,\alpha,\beta\in \mathbb{N}_0^n,
\end{equation*}where $\Delta_\xi$ is the standard difference operator acting on vector-valued sequences.

\subsection{Characterisation of nuclear operators on $L^p(\mathbb{T}^n,L^2(Y,\nu))$} 

\begin{theorem} \label{chartn}
 Let $0<s\leq 1$ and let $T_\sigma:L^p(\mathbb{T}^n, L^2(Y, \nu)) \rightarrow L^p(\mathbb{T}^n, L^2(Y, \nu)), 1 \leq p <\infty$ be a vector-valued periodic pseudo-differential operator with operator valued symbol $\sigma: \mathbb{T}^n \times \mathbb{Z}^n \rightarrow \mathcal{L}(L^2(Y,\nu)).$ Then $T_\sigma$ is a $s$-nuclear, $0<s \leq 1,$ operator if and only if 
 
 \begin{equation} \label{18vish}
 \sigma(x,\xi)= e^{-2\pi i x \cdot \xi} \left(\sum_{k=1}^\infty (h_k \otimes \widehat{g_k}) (x, - \xi)\right),\,\,\,\,\,\text{a.e.}\,\, x \in \mathbb{T}^n, \xi \in \mathbb{Z}^n ,\end{equation}
 where $\{g_k\}_k$ and $\{h_k\}_k$ are two sequences in $L^p(\mathbb{T}^n, L^2(Y, \nu))$ and  $L^{p'}(\mathbb{T}^n, L^2(Y, \nu))$ respectively such that $$\sum_{k=1}^\infty \|h_k\|^s_{L^{p'}(\mathbb{T}^n, L^2(Y, \nu))} \|g_k\|^s_{L^p(\mathbb{T}^n, L^2(Y, \nu))} <\infty$$ and the convergence in right hand side of \eqref{18vish} will be understood in sense of the usual operator norm convergence in $\mathcal{L}(L^2(Y, \nu))$.   
\end{theorem}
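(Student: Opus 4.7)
The plan is to deploy Delgado's Theorem~\ref{DelTheorem2.8}, which characterises $s$-nuclearity via a kernel expansion, and to translate between the kernel and the operator-valued symbol by Fourier inversion. A direct rearrangement of the defining formula $T_\sigma f(x)=\sum_\xi e^{i2\pi x\cdot\xi}\sigma(x,\xi)\widehat{f}(\xi)$ shows that, viewed as an integral operator on $L^p(\mathbb{T}^n,L^2(Y,\nu))$, $T_\sigma$ has operator-valued kernel
$$K(x,y) = \sum_{\xi\in\mathbb{Z}^n} e^{i2\pi(x-y)\cdot\xi}\,\sigma(x,\xi),$$
and therefore $\sigma$ is recovered from $K$ via Fourier inversion in the second variable, $\sigma(x,\xi) = e^{-i2\pi x\cdot\xi}\mathscr{F}_y[K(x,\cdot)](-\xi)$.

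For the forward implication, assume $T_\sigma$ is $s$-nuclear. Theorem~\ref{DelTheorem2.8} produces sequences $\{h_k\}\subset L^{p'}(\mathbb{T}^n,L^2(Y,\nu))$ and $\{g_k\}\subset L^p(\mathbb{T}^n,L^2(Y,\nu))$ with $\sum_k \|h_k\|^s_{L^{p'}}\|g_k\|^s_{L^p}<\infty$ such that the kernel of $T_\sigma$ expands as $K(x,y)=\sum_k(h_k\otimes g_k)(x,y)$. To read off the symbol I test $T_\sigma$ on the elementary functions $\exp_\xi\otimes v$, where $\exp_\xi(y)=e^{i2\pi y\cdot\xi}$ and $v\in L^2(Y,\nu)$. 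On one hand, since the Fourier transform of $\exp_\xi\otimes v$ at $\eta$ is $\delta_{\eta,\xi}v$, the defining formula yields immediately
$$T_\sigma(\exp_\xi\otimes v)(x) = e^{i2\pi x\cdot\xi}\,\sigma(x,\xi)\,v.$$
On the other hand, inserting the nuclear expansion of the kernel, unwinding the tensor-product action from the preliminaries, and recognising $\int_{\mathbb{T}^n} e^{i2\pi y\cdot\xi}g_k(y,z)\,dy=\mathscr{F}(g_k)(-\xi,z)$ for each $z\in Y$, one arrives at
$$T_\sigma(\exp_\xi\otimes v)(x) = \sum_k \bigl(h_k\otimes\mathscr{F}(g_k)\bigr)(x,-\xi)\,v.$$
Equating the two expressions and multiplying by $e^{-i2\pi x\cdot\xi}$ delivers the claimed representation of $\sigma(x,\xi)v$; the $s$-summability on $\{(h_k,g_k)\}$ is inherited directly from Delgado's decomposition, and the pointwise convergence of the resulting series in $L^2(Y,\nu)$ (for fixed $v$) follows from Cauchy--Schwarz.

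For the reverse implication, assume $\sigma$ admits the stated representation. Substituting into $T_\sigma f(x)=\sum_\xi e^{i2\pi x\cdot\xi}\sigma(x,\xi)\widehat{f}(\xi)$, the exponentials cancel, and after interchanging the orders of summation (justified by the $s$-summability combined with H\"older's inequality and the embedding $\ell^s\hookrightarrow\ell^1$ valid for $s\leq 1$) one identifies $T_\sigma f(x)=\int_{\mathbb{T}^n} \bigl(\sum_k(h_k\otimes g_k)(x,y)\bigr)f(y)\,dy$, so that Theorem~\ref{DelTheorem2.8} closes the argument.

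The main obstacle is the justification of the various interchanges of summation and Bochner integration, since the operator-valued kernel series $\sum_\xi e^{i2\pi(x-y)\cdot\xi}\sigma(x,\xi)$ need not converge classically. The forward direction sidesteps this by working with one Fourier frequency at a time through the test-function argument, while in the reverse direction the $s$-nuclear summability, coupled with H\"older's inequality applied to the $L^{p'}$--$L^p$ duality, upgrades to absolute convergence of the kernel series in $L^1(\mathbb{T}^n\times\mathbb{T}^n, L^2(Y,\nu))$, which is precisely the input required by Delgado's theorem.
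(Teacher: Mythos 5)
Your proposal follows essentially the same route as the paper's proof: in both directions it hinges on Delgado's Theorem~\ref{DelTheorem2.8}, with the forward implication obtained by testing the kernel expansion against the functions $f_\xi(y)=e^{i2\pi\xi\cdot y}v$ (whose Fourier coefficient $\widehat{f_\xi}(\eta)=\delta_{\eta,\xi}v$ isolates one frequency) and the reverse implication by substituting the symbol representation into the quantization formula and identifying the integral kernel $\sum_k(h_k\otimes g_k)(x,y)$. The closing remark about absolute convergence in $L^1(\mathbb{T}^n\times\mathbb{T}^n,L^2(Y,\nu))$ is slightly off (the tensor $(h_k\otimes g_k)(x,y)$ takes values in $L^2(\nu\otimes\nu)$, and Delgado's hypothesis is the weighted $s$-summability of the norms, not $L^1$-convergence of the kernel), but this is a side comment and does not affect the argument, which is correct and matches the paper's.
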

\begin{proof}
 Let $T_\sigma$ be an $s$-nuclear, $0<s \leq 1$ operator. Therefore,  by \cite[Theorem 2.8]{Delgado}, there exist two sequences $\{g_k\}_k$ and $\{h_k\}_k$ in $L^p(\mathbb{T}^n, L^2(Y, \nu))$ and  $L^{p'}(\mathbb{T}^n, L^2(Y, \nu))$ respectively with $$\sum_{k=1}^\infty \|h_k\|^s_{L^{p'}(\mathbb{T}^n, L^2(Y, \nu))} \|g_k\|^s_{L^p(\mathbb{T}^n, L^2(Y, \nu))} <\infty$$ such that 
 
 \begin{equation}
    \,\,\,\,\,\,\,\,\,\,\,\,\,\,\,\,\,\,\,\,\,\,\,\,\,\,\,\,\,\,\,\,\,\,\,\,\,\,\,\,\,\,\,\,\,\,\,\,\,\,\,\,\,\,\,\, T_\sigma f(x)= \int \limits_{\mathbb{T}^n} \left( \sum_{k=1}^\infty (h_k \otimes g_k ) (x,y) \right) f(y)\, dy \,\,\,\,\,\,\, a.e.\, x
 \end{equation}
 Therefore, by the definition of $T_\sigma,$ we get 
 
 \begin{equation} \label{eqvis1}
     \sum_{\eta \in \mathbb{Z}^n} e^{2 \pi i x \cdot \eta} \sigma(x, \eta) \widehat{f}(\eta) =  \int \limits_{\mathbb{T}^n} \left( \sum_{k=1}^\infty (h_k \otimes g_k ) (x,y) \right) f(y)\, dy.
 \end{equation}
 
 Now, for every $\xi \in \mathbb{Z}^n$ and any $v \in L^2(Y, \nu)$ define a function $f$ by $f_\xi(y)= e^{2 \pi i \xi \cdot y} v \in L^2(Y, \nu)$ for every $y \in \mathbb{T}^n.$ Note that 
 $$\widehat{f_\xi}(\eta)= \begin{cases} v, & \xi= \eta, \\ 0, & \xi \neq \eta. \end{cases} \,\,\,\,\,\,\,\,\,\,\,$$
 Since equality \eqref{eqvis1} above holds for every $f \in L^p(\mathbb{T}^n, L^2(Y,\nu)).$ In particular, by choosing $f= f_\xi,$ $v\neq 0,$ we get
 
 \begin{align} \label{intechange}
   \,\,\,\,\,\,\,\,\,\,\,\,\,\,\,\,\,\,\,\,\,\,\,\,\,\,\,\,\,  e^{2 \pi i x \cdot \xi} \sigma(x, \xi)v &= \int \limits_{\mathbb{T}^n} \left( \sum_{k=1}^\infty (h_k \otimes g_k ) (x,y) \right) e^{2 \pi i \xi \cdot y} v \, dy \nonumber 
   \end{align}
   Now note that, 
   \begin{align*}
       \left\| \left( \sum_{k=1}^\infty (h_k \otimes g_k ) (x,y) \right) e^{2 \pi i \xi \cdot y}  \right\|_{L^2(\nu \times \nu)} &\leq \left\| \left( \sum_{k=1}^\infty (h_k \otimes g_k ) (x,y) \right)  \right\|_{L^2(\nu \times \nu)}  \\&= \left\| K(x, y)  \right\|_{L^2(\nu \times \nu)}.
   \end{align*} 
   In the view of \cite[Remark 2.11 (ii)]{Delgado} we see that $K \in L^1(\mathbb{T}^{n} \times \mathbb{T}^n, L^2(Y \times Y, \nu \times \nu))$ and therefore, $\left\| K(x, y)  \right\|_{L^2(\nu \times \nu)} \in L^1(\mathbb{T}^n \times \mathbb{T}^n)$. So, we can apply Dominated convergence theorem for the Bochner integral  to get 
   \begin{align*}
   \int \limits_{\mathbb{T}^n} \left( \sum_{k=1}^\infty (h_k \otimes g_k ) (x,y) \right) e^{2 \pi i \xi \cdot y} v \, dy &= \int \limits_{\mathbb{T}^n} \lim_{N \rightarrow \infty} \left( \sum_{k=1}^N (h_k \otimes g_k ) (x,y) \right) e^{2 \pi i \xi \cdot y} v \, dy \\&= \lim_{N \rightarrow \infty} \int \limits_{\mathbb{T}^n}  \left( \sum_{k=1}^N (h_k \otimes g_k ) (x,y) \right) e^{2 \pi i \xi \cdot y} v \, dy\\&= \lim_{N \rightarrow \infty}   \sum_{k=1}^N h_k(x) \int \limits_{\mathbb{T}^n}   g_k(y)   e^{2 \pi i \xi \cdot y} v \, dy \\&= \sum_{k=1}^\infty h_k(x)  \int \limits_{\mathbb{T}^n} g_k(y) e^{2 \pi i \eta \cdot y} dy\, v\\ &= \sum_{k=1}^\infty h_k(x) \, \widehat{g_k}(-\xi)v = \left( \sum_{k=1}^\infty (h_k \otimes \widehat{g_k})(x, -\xi) \right) v,
 \end{align*} for all $v \in L^2(Y, \nu).$
Therefore, we get 

\begin{equation}\sigma(x, \xi) = e^{-2 \pi i x \cdot \xi}\sum_{k=1}^\infty (h_k \otimes \widehat{g_k})(x, -\xi)\,\,\,\,\,\text{a.e.}\,\, x \in \mathbb{T}^n, \xi \in \mathbb{Z}^n.\end{equation}
 Conversely, assume that there exist two sequences $\{g_k\}_k$ and $\{h_k\}_k$ in $L^p(\mathbb{T}^n, L^2(Y, \nu))$ and $L^{p'}(\mathbb{T}^n, L^2(Y, \nu))$ with 
 $$\sum_{k=1}^\infty \|h_k\|^s_{L^{p'}(\mathbb{T}^n, L^2(Y, \nu))} \|g_k\|^s_{L^p(\mathbb{T}^n, L^2(Y, \nu))} <\infty$$ such that
 
 $$\sigma(x, \xi) = e^{-2 \pi i x \cdot \xi}\sum_{k=1}^\infty (h_k \otimes \mathscr{F}(g_k)(x, -\xi).$$ 
 
 Therefore, for any $f \in L^p(\mathbb{T}^n, L^2(Y, \nu))$
 \begin{align*}
  \,\,\,\,\,\,\,\,\,\,\,\,\,\,\,\,\,\,\,\,\,\,\,\,\,\,\,\,\,\, \,\,\,\,\,\,\,\,\,\,\,\,\,\,\,\,\,\,  T_\sigma f(x) &= \sum_{\eta \in \mathbb{Z}^n} e^{2 \pi i x \cdot \eta} \sigma(x, \eta) \widehat{f}(\eta) \\& = \sum_{\eta \in \mathbb{Z}^n} e^{2 \pi i x \cdot \eta} \left( e^{-2 \pi i x \cdot \eta}\sum_{k=1}^\infty (h_k \otimes \widehat{g_k}(x, -\eta) \right) \widehat{f}(\eta).
  \end{align*} Because $\sigma_N(x,\xi):=e^{-2 \pi i x \cdot \eta}\sum_{k\leq N} (h_k \otimes \widehat{g_k}(x, -\eta)$ converges to $\sigma(x,\xi)$ in the operator norm, we have
  \begin{align*}
    \sum_{\eta \in \mathbb{Z}^n}  \left( \sum_{k=1}^\infty (h_k \otimes \widehat{g_k}(x, -\eta) \right) \widehat{f}(\eta) &= \sum_{\eta \in \mathbb{Z}^n} \lim_{N\rightarrow\infty}\sum_{k\leq N} (h_k \otimes \widehat{g_k}(x, -\eta)\widehat{f}(\eta)\\ &=
    \lim_{N\rightarrow\infty}\lim_{M\rightarrow\infty}\sum_{|\eta|\leq M } \sum_{k\leq N} (h_k \otimes \widehat{g_k}(x, -\eta)\widehat{f}(\eta)\\
    &=\lim_{N\rightarrow\infty}\lim_{M\rightarrow\infty}\sum_{k\leq N}\sum_{|\eta|\leq M }  (h_k \otimes \widehat{g_k}(x, -\eta)\widehat{f}(\eta)\\
    &=\lim_{N\rightarrow\infty}\lim_{M\rightarrow\infty}\sum_{k\leq N}\sum_{|\eta|\leq M }  (h_k \otimes \widehat{g_k}(x, -\eta)\widehat{f}(\eta) \\
    &=\sum_{k=1}^\infty\sum_{\eta\in \mathbb{Z}^n}(h_k \otimes \widehat{g_k}(x, -\eta) \widehat{f}(\eta),
  \end{align*} where we have use that, for every $N,$ $\sigma_{N}$ is bounded on $L^2(Y).$ So, we have, by using dominated convergence theorem,  that
  \begin{align*}
\sum_{\eta \in \mathbb{Z}^n}  \left( \sum_{k=1}^\infty (h_k \otimes \widehat{g_k}(x, -\eta) \right) \widehat{f}(\eta)&=\sum_{k=1}^\infty \left( \sum_{\eta\in \mathbb{Z}^n}(h_k \otimes \widehat{g_k}(x, -\eta) \widehat{f}(\eta) \right) \\&=    \sum_{k=1}^\infty \left( \sum_{\eta \in \mathbb{Z}^n} \left( \int \limits_{\mathbb{T}^n} (h_k \otimes g_k)(x, y) e^{2 \pi i \eta \cdot y}\, dy \right)  \widehat{f}(\eta) \right) \\&= \sum_{k=1}^\infty \left( \lim_{M \rightarrow \infty} \sum_{|\eta| \leq M}  \int \limits_{\mathbb{T}^n} (h_k \otimes g_k)(x, y) e^{2 \pi i \eta \cdot y}\, dy   \widehat{f}(\eta) \right) \\&= \sum_{k=1}^\infty \left(   \int \limits_{\mathbb{T}^n} \lim_{M \rightarrow \infty}  \sum_{|\eta| \leq M} (h_k \otimes g_k)(x, y) e^{2 \pi i \eta \cdot y}\, dy   \widehat{f}(\eta) \right) \\&= \sum_{k=1}^\infty  \int \limits_{\mathbb{T}^n} h_k(x) g_k(y)  dy \left( \sum_{\eta \in \mathbb{Z}^n} e^{2 \pi i \eta \cdot y} \widehat{f}(\eta) \right)  \\
     &= \int \limits_{\mathbb{T}^n} \left( \sum_{k=1}^\infty (h_k \otimes g_k) (x,y) \right) f(y)\, dy,
 \end{align*} where the interchange of summation and integration in the last inequality can be justified using similar argument as in first part.
 Therefore, by \cite[Theorem 2.8]{Delgado}, $T_\sigma$ is an $s$-nuclear, $0<s \leq 1$ operator.
\end{proof}  

\begin{corollary} Let $T_\sigma:L^p(\mathbb{T}^n, L^2(Y, \nu)) \rightarrow L^p(\mathbb{T}^n, L^2(Y, \nu)), 1 \leq p <\infty$ be a vector-valued periodic pseudo-differential operator with operator valued symbol $\sigma: \mathbb{T}^n \times \mathbb{Z}^n \rightarrow \mathcal{L}(L^2(Y,\nu)).$ If $T_\sigma$ is a nuclear  operator then $\sigma(x, \xi)$ is a nuclear (trace class) operator for almost every $(x, \xi) \in \mathbb{T}^n \times \mathbb{Z}^n.$
\end{corollary}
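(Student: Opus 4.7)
The plan is to apply Theorem \ref{chartn} with $s=1$ to the nuclear operator $T_\sigma$ and then bound the trace-class norm of $\sigma(x,\xi)$ pointwise using the resulting decomposition. First I would invoke Theorem \ref{chartn} to extract sequences $\{h_k\}_k\subset L^{p'}(\mathbb{T}^n, L^2(Y,\nu))$ and $\{g_k\}_k\subset L^{p}(\mathbb{T}^n, L^2(Y,\nu))$ with $\sum_k \|h_k\|_{L^{p'}} \|g_k\|_{L^p}<\infty$ such that, for every $(x,\xi)$,
$$\sigma(x,\xi) = e^{-2\pi i x\cdot \xi}\sum_{k=1}^{\infty}(h_k\otimes \mathscr{F}(g_k))(x,-\xi)\,\mathrm{Id}_{L^2(Y,\nu)},$$
with the series converging in the operator norm on $L^2(Y,\nu)$. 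The crucial observation is that, at each fixed pair $(x,\xi)$, the summand $(h_k\otimes \mathscr{F}(g_k))(x,-\xi)$ is a rank-one operator on $L^2(Y,\nu)$ (as can be read off by unwinding the tensor-product definition from the preliminaries), and its trace-class norm equals $\|h_k(x,\cdot)\|_{L^2(Y,\nu)}\,\|\mathscr{F}(g_k)(-\xi,\cdot)\|_{L^2(Y,\nu)}$. Hence it suffices to show that
$$\sum_{k=1}^{\infty} \|h_k(x,\cdot)\|_{L^2(Y,\nu)}\,\|\mathscr{F}(g_k)(-\xi,\cdot)\|_{L^2(Y,\nu)}<\infty$$
for a.e.\ $(x,\xi)$, because then the partial sums converge absolutely in trace norm and the pointwise limit $\sigma(x,\xi)$ is itself trace class.

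Second, I would use the elementary bound $\|\mathscr{F}(g_k)(-\xi,\cdot)\|_{L^2(Y,\nu)}\leq \|g_k\|_{L^{1}(\mathbb{T}^n,L^2(Y,\nu))}\leq \|g_k\|_{L^p(\mathbb{T}^n,L^2(Y,\nu))}$, the last inequality using $|\mathbb{T}^n|=1$ and the inclusion of $L^p$-spaces on a finite-measure space. This eliminates the $\xi$-dependence and yields the majorant $\sum_k \|h_k(x,\cdot)\|_{L^2(Y,\nu)}\|g_k\|_{L^p}$. Integrating this over $x\in \mathbb{T}^n$, Tonelli together with $\|h_k\|_{L^1(\mathbb{T}^n,L^2(Y,\nu))}\leq \|h_k\|_{L^{p'}(\mathbb{T}^n,L^2(Y,\nu))}$ gives an upper bound of $\sum_k \|h_k\|_{L^{p'}}\|g_k\|_{L^p}<\infty$. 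Therefore the majorant is finite for a.e.\ $x\in\mathbb{T}^n$, and since it does not depend on $\xi$, $\sigma(x,\xi)$ is nuclear for a.e.\ $x$ and every $\xi\in\mathbb{Z}^n$. Because $\mathbb{Z}^n$ is countable, this exceptional set in $x$ is still null, which gives nuclearity for a.e.\ $(x,\xi)\in\mathbb{T}^n\times\mathbb{Z}^n$.

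The only step that requires any real thought is the identification of $(h_k\otimes \mathscr{F}(g_k))(x,-\xi)$ as a rank-one operator on $L^2(Y,\nu)$ with the stated trace-class norm; once that is in place, the rest is the routine interchange of summation and integration combined with the embedding $L^{p'}(\mathbb{T}^n)\hookrightarrow L^1(\mathbb{T}^n)$ and the inclusion $\widehat{L^1}\subset \ell^{\infty}$ for the Fourier transform.
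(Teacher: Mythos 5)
Your argument is correct and follows essentially the same route as the paper's proof: apply Theorem~\ref{chartn} to extract the nuclear decomposition, bound $\Vert\mathscr{F}(g_k)(-\xi)\Vert_{L^2(Y,\nu)}\leq\Vert g_k\Vert_{L^p(\mathbb{T}^n,L^2(Y,\nu))}$ via the $L^1$ embedding, and then integrate in $x$ to conclude a.e.\ summability of the majorant $\sum_k\Vert h_k(x,\cdot)\Vert_{L^2(Y,\nu)}\Vert g_k\Vert_{L^p}$. The only cosmetic differences are that the paper establishes a.e.\ finiteness by a contradiction argument on the set $Z$ where the majorant diverges (rather than your direct Tonelli application), and it leaves implicit the countability-of-$\mathbb{Z}^n$ remark you correctly spell out to upgrade ``a.e.\ $x$, all $\xi$'' to ``a.e.\ $(x,\xi)$''; you also make explicit the identification of each summand as a rank-one operator with trace norm $\Vert h_k(x,\cdot)\Vert\,\Vert\mathscr{F}(g_k)(-\xi,\cdot)\Vert$, which the paper states without elaboration.
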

\begin{proof}  
Since $T_\sigma$ is a nuclear operator, by the proof of Theorem \ref{chartn}, the operator $T_{\sigma}$ is given by  
    \begin{equation} 
 T_\sigma f(x) =\int \limits_{\mathbb{T}^n} \left( \sum_{k=1}^\infty (h_k \otimes g_k) (x,y) \right) f(y)\, dy,\,\end{equation}
 where the symbol can be expressed as,
\begin{equation} \sigma(x,\xi)v= e^{-2\pi i x \cdot \xi} \left(\sum_{k=1}^\infty h_k \otimes \mathscr{F}(g_k) (x, - \xi)\right)v,\end{equation}
 $v\in L^2(Y,\nu),$ with kernel $k=K(x,y)= \sum_{k=1}^\infty (h_k \otimes g_k) (x,y),$ 
 where $\{g_k\}_k$ and $\{h_k\}_k$ are two sequences in $L^p(\mathbb{T}^n, L^2(Y, \nu))$ and  $L^{p'}(\mathbb{T}^n, L^2(Y, \nu))$ respectively such that $$\sum_{k=1}^\infty \|h_k\|_{L^{p'}(\mathbb{T}^n, L^2(Y, \nu))} \|g_k\|_{L^p(\mathbb{T}^n, L^2(Y, \nu))} <\infty.$$
 Let us note that
 \begin{align*}
     \Vert \widehat{g}_{k}(-\xi) \Vert_{L^2(Y,\nu)} &=\Vert \int \limits_{\mathbb{T}^n}e^{i2\pi y\cdot \xi}g(y)dy\Vert_{L^2(Y,\nu)}\\&\leq \int \limits_{\mathbb{T}^n}\Vert g_k(y)\Vert_{L^2(Y,\nu)}dy\\
     &\leq  \left(   \int \limits_{\mathbb{T}^n}\Vert g_k(y)\Vert^p_{L^2(Y,\nu)}dy \right)^{\frac{1}{p}}=\Vert g_k \Vert_{L^p(\mathbb{T}^n,L^2(Y,\nu))}.\,\,\,\,\,\,\,\,\,\,\,\,\,\,\,\,\,\,\,\,\,\,\,\,\,\,\,\,\,\,\,\,\,\,\,\,\,\,\,\,\,\,\,\,\,\,\,\,
 \end{align*} Now, we will prove that the set 
 \begin{eqnarray}
 Z:=\{x\in \mathbb{T}^n: \sum_{k=1}^\infty  \Vert h_k(x) \Vert_{L^2(Y,\nu)}\Vert g_k \Vert_{L^p(\mathbb{T}^n,L^2(Y,\nu))}=\infty \}
 \end{eqnarray} has measure zero. This can be done by contradiction. If we assume that $Z$ has positive measure, then 
 \begin{equation}
     \int\limits_{Z}\left( \sum_{k=1}^\infty \Vert h_k(x) \Vert_{L^2(Y,\nu)}\Vert g_k \Vert_{L^p(\mathbb{T}^n,L^2(Y,\nu))} \right)dx=\infty.
 \end{equation} However, we have
 \begin{align*}
    & \int\limits_{Z}\left( \sum_{k=1}^\infty \Vert h_k(x) \Vert_{L^2(Y,\nu)}\Vert g_k \Vert_{L^p(\mathbb{T}^n,L^2(Y,\nu))} \right)dx\,\,\,\,\,\,\,\,\,\,\,\,\,\,\,\,\, \\ 
    &\leq \int \limits_{\mathbb{T}^n}\left( \sum_{k=1}^\infty \Vert h_k(x) \Vert_{L^2(Y,\nu)}\Vert g_k \Vert_{L^p(\mathbb{T}^n,L^2(Y,\nu))} \right)dx\\
    &= \sum_{k=1}^\infty \int \limits_{\mathbb{T}^n}\Vert h_k(x) \Vert_{L^2(Y,\nu)}dx\Vert g_k \Vert_{L^p(\mathbb{T}^n,L^2(Y,\nu))} \\
    &\leq  \sum_{k=1}^\infty \Vert h_k \Vert_{L^{p'}(\mathbb{T}^n,L^2(Y,\nu))}\Vert g_k \Vert_{L^p(\mathbb{T}^n,L^2(Y,\nu))}<\infty,
 \end{align*} and we obtain a contradiction. Now, by using that
 $$\sum_{k=1}^\infty  \Vert h_k(x) \Vert_{L^2(Y,\nu)}\Vert g_k \Vert_{L^p(\mathbb{T}^n,L^2(Y,\nu))}<\infty,\,\,a.e.x,\,\,\,\,$$ 
 and the inequality: $ \Vert \widehat{g}_{k}(-\xi) \Vert_{L^2(Y,\nu)} \leq \Vert g_k \Vert_{L^p(\mathbb{T}^n,L^2(Y,\nu))},$ we conclude that
 \begin{eqnarray}
 \sum_{k=1}^\infty  \Vert h_k(x) \Vert_{L^2(Y,\nu)}\Vert \widehat{g_k}(-\xi) \Vert_{L^2(Y,\nu)}<\infty,\,a.e.x.
 \end{eqnarray} So, we have proved that in the right hand side of the equation,
 \begin{eqnarray}
 \sigma(x,\xi)= e^{-2\pi i x \cdot \xi} \sum_{k=1}^\infty h_k \otimes \mathscr{F}(g_k) (x, - \xi)= \sum_{k=1}^\infty e^{-2\pi i x \cdot \xi}h_k(x)  \mathscr{F}(g_k) ( - \xi)
 \end{eqnarray}  we have a nuclear decomposition of $\sigma(x,\xi).$ So, we conclude that $\sigma(x,\xi)$ is a nuclear (trace class) operator on $L^2(Y,\nu).$
 \end{proof}

Now, we present the following application of Theorem  \ref{chartn}.
 
\begin{theorem}\label{decaying}
  Let $1<p\leq 2,$     and let $T_\sigma$ be the vector-valued operator  associated to $\sigma(\cdot,\cdot).$ If $T_\sigma:L^{p}(\mathbb{T}^n,L^2(Y,\nu))\rightarrow L^{p}(\mathbb{T}^n,L^2(Y,\nu))$ is nuclear. Then $\Vert a(x,\xi)\Vert_{\textnormal{op}}\in L^{p'}(\mathbb{T}^n\times \mathbb{Z}^n),$ this means that
 \begin{equation}
     \Vert \Vert  a(x,\xi)\Vert_{\textnormal{op}}\Vert_{ L^{p'}(\mathbb{T}^n\times \mathbb{Z}^n)}:=\left(\sum\limits_{\xi\in\mathbb{Z}^n} \int\limits_{\mathbb{T}^n}\Vert a(x,\xi)\Vert^{p'}_{\textnormal{op}}dx\right)^{\frac{1}{p'}}<\infty.
 \end{equation} 
\end{theorem}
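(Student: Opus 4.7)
The plan is to combine the nuclear decomposition of the symbol provided by Theorem \ref{chartn} with a pointwise operator-norm estimate, Minkowski's inequality, and the Hausdorff--Young inequality for Hilbert-valued Fourier series. Applying Theorem \ref{chartn} with $s=1$, the nuclearity of $T_\sigma$ yields sequences $\{g_k\}\subset L^p(\mathbb{T}^n,L^2(Y,\nu))$ and $\{h_k\}\subset L^{p'}(\mathbb{T}^n,L^2(Y,\nu))$ with $\sum_{k=1}^\infty\|h_k\|_{L^{p'}}\|g_k\|_{L^p}<\infty$ such that, as operators on $L^2(Y,\nu)$,
\begin{equation*}
\sigma(x,\xi)=e^{-2\pi i x\cdot\xi}\sum_{k=1}^\infty h_k(x)\otimes\mathscr{F}(g_k)(-\xi).
\end{equation*}

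Next I would bound $\|\sigma(x,\xi)\|_{op}$ pointwise. Each summand is a rank-one operator on $L^2(Y,\nu)$ whose operator norm equals $\|h_k(x)\|_{L^2(Y,\nu)}\|\mathscr{F}(g_k)(-\xi)\|_{L^2(Y,\nu)}$, so the triangle inequality, applied to arbitrary $v\in L^2(Y,\nu)$, yields
\begin{equation*}
\|\sigma(x,\xi)\|_{op}\leq\sum_{k=1}^\infty\|h_k(x)\|_{L^2(Y,\nu)}\,\|\mathscr{F}(g_k)(-\xi)\|_{L^2(Y,\nu)}.
\end{equation*}
Since $1<p\leq 2$ implies $p'\geq 2\geq 1$, Minkowski's inequality in $L^{p'}(\mathbb{T}^n\times\mathbb{Z}^n)$, combined with the tensor-product structure of each summand, gives
\begin{equation*}
\bigl\|\,\|\sigma(\cdot,\cdot)\|_{op}\,\bigr\|_{L^{p'}(\mathbb{T}^n\times\mathbb{Z}^n)}\leq\sum_{k=1}^\infty\|h_k\|_{L^{p'}(\mathbb{T}^n,L^2(Y,\nu))}\Bigl(\sum_{\xi\in\mathbb{Z}^n}\|\mathscr{F}(g_k)(-\xi)\|_{L^2(Y,\nu)}^{p'}\Bigr)^{1/p'}.
\end{equation*}

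To close the argument, I would invoke the Hausdorff--Young inequality for Bochner-valued Fourier series with values in the Hilbert space $L^2(Y,\nu)$: for $1\leq p\leq 2$,
\begin{equation*}
\Bigl(\sum_{\xi\in\mathbb{Z}^n}\|\mathscr{F}(g)(\xi)\|_{L^2(Y,\nu)}^{p'}\Bigr)^{1/p'}\leq\|g\|_{L^p(\mathbb{T}^n,L^2(Y,\nu))}.
\end{equation*}
This follows from Riesz--Thorin interpolation applied to $\mathscr{F}:L^p(\mathbb{T}^n,L^2(Y,\nu))\to\ell^{p'}(\mathbb{Z}^n,L^2(Y,\nu))$, between the trivial bound at $p=1$ and the vector-valued Plancherel identity at $p=2$, the latter being available because the target of the Fourier transform is a Hilbert space. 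Substituting into the previous display produces
\begin{equation*}
\bigl\|\,\|\sigma\|_{op}\,\bigr\|_{L^{p'}(\mathbb{T}^n\times\mathbb{Z}^n)}\leq\sum_{k=1}^\infty\|h_k\|_{L^{p'}}\|g_k\|_{L^p}<\infty,
\end{equation*}
which is the desired conclusion.

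The main obstacle, such as it is, is the operator-norm majorization of the series defining $\sigma(x,\xi)$: Theorem \ref{chartn} gives convergence only in the sense of the norm on $L^2(Y,\nu)$ (i.e.\ after application to a vector), not a priori in the operator norm. However, applying the series to an arbitrary $v\in L^2(Y,\nu)$ and using $\|(h_k(x)\otimes\mathscr{F}(g_k)(-\xi))v\|_{L^2(Y,\nu)}\leq\|v\|_{L^2}\|h_k(x)\|_{L^2}\|\mathscr{F}(g_k)(-\xi)\|_{L^2}$ produces precisely the pointwise majorant used above, so no extra regularity on the decomposition is required.
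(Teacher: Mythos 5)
Your argument is correct and follows the same route as the paper: apply Theorem \ref{chartn} with $s=1$ to obtain the nuclear decomposition of $\sigma$, bound $\|\sigma(x,\xi)\|_{op}$ pointwise by the sum of the rank-one operator norms $\|h_k(x)\|_{L^2(Y,\nu)}\|\mathscr{F}(g_k)(-\xi)\|_{L^2(Y,\nu)}$, and then combine Minkowski's inequality with the Hilbert-space-valued Hausdorff--Young inequality. You are slightly more explicit than the paper about two technical points---why the mode of convergence in Theorem \ref{chartn} still gives an operator-norm majorant, and the Minkowski step in $L^{p'}(\mathbb{T}^n\times\mathbb{Z}^n)$---but the underlying proof is the same.
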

\begin{proof}
 Let $1<p\leq 2.$   If $T_\sigma:L^{p}(\mathbb{T}^n,L^2(Y,\nu))\rightarrow L^{p}(\mathbb{T}^n,L^2(Y,\nu))$ extends to a  nuclear operator, by Theorem \ref{chartn} 
$$\sigma(x, \xi) = e^{-2 \pi i x \cdot \xi}\sum_{k=1}^\infty (h_k \otimes \mathscr{F}(g_k)(x, -\xi)\textnormal{Id}_{L^2(Y,\nu)},
$$ where   $\{g_k\}_{k\in\mathbb{N}}$ and $\{h_k\}_{k\in\mathbb{N}}$ are sequences of functions satisfying 
\begin{equation}
\sum_{k=0}^{\infty}\Vert g_k\Vert_{L^{p}(\mathbb{T}^n,L^2(Y,\nu))}\Vert h_{k}\Vert_{L^{p'}(\mathbb{T}^n,L^2(Y,\nu))}<\infty.
\end{equation} 
Therefore, if we take the operator norm, by the triangle inequality, we deduce the estimate
\begin{align*}
   \Vert  \sigma(x, \xi)\Vert_{\textnormal{op}} \leq  \sum_{k=1}^\infty |h_k(x)| |\mathscr{F}(g_k)( -\xi)|.
\end{align*}
Again, if we take the $L^{p'}_x$-norm, we have,
\begin{align*}
\Vert a(x,\xi) \Vert_{L^{p'}_x(\mathbb{T}^n,L^2(Y,\nu))} &= \left\Vert e^{-i2\pi x\cdot \xi}\sum_{k=1}^{\infty}h_{k}(x)\mathscr{F}(g_k)( -\xi)  \right\Vert_{L^{p'}_x(\mathbb{T}^n,L^2(Y,\nu))}\\
&=\left\Vert \sum_{k=1}^{\infty}h_{k}(x)\mathscr{F}(g_k)( -\xi)  \right\Vert_{L^{p'}_x(\mathbb{T}^n,L^2(Y,\nu))}\\
&\leq \sum_{k=1}^{\infty}\Vert h_{k}\Vert_{L^{p'}((\mathbb{T}^n,L^2(Y,\nu)))}|\mathscr{F}(g_k)( -\xi) |.
\end{align*} By the vector-valued  Hausdorff-Young inequality, we deduce that,  $$\Vert \mathscr{F}(g_k) \Vert_{L^{p'}(\mathbb{Z}^n,L^2(Y,\nu)))}\leq \Vert  {g}_k\Vert_{L^{p}(\mathbb{T}^n,L^2(Y,\nu))},$$ for all $1<p\leq 2.$
Consequently,
\begin{align*}
    \Vert \Vert a(x,\xi) \Vert_{\textnormal{op}}\Vert_{ L^{p'}_xL^{p'}_\xi(\mathbb{T}^n\times \mathbb{Z}^n),} &=\left(\sum\limits_{\xi \in\mathbb{Z}^n} \left(\int\limits_{\mathbb{T}^n}\|\sigma(x,\xi)\|_{\textnormal{op}}^{p'}dx\right)^{\frac{p'}{p'}}  \right)^{\frac{1}{p'}}\\
    &\leq\left\Vert  \sum_{k=1}^{\infty}\Vert h_{k}\Vert_{L^{p'}}|\mathscr{F}(g_k)( -\xi)| \right\Vert_{L^{p'}_\xi} \\
    &\leq \sum_{k=1}^{\infty}\Vert h_{k}\Vert_{L^{p'}}\Vert\mathscr{F}(g_k)\Vert_{L^{p'}}\\
     &\leq \sum_{k=1}^{\infty}\Vert h_{k}\Vert_{L^{p'}}\Vert{g}_k\Vert_{L^{p}}<\infty.
\end{align*}
Thus, we finish the proof.
\end{proof}

 \subsection{ Characterisation of nuclear operators on $L^p(\mathbb{Z}^n, L^p(Y, \nu))$}
 
 In order to characterise those nuclear operators on $L^p(\mathbb{Z}^n, L^p(Y, \nu)) $, for $h\in L^{p'}(\mathbb{Z}^n,L^2(Y,\nu))$ and $g\in L^p(\mathbb{Z}^n,L^2(Y,\nu)),$ we define the function 
\begin{eqnarray}
 h\otimes \mathcal{F}_{\mathbb{Z}^n}(g)(x,\xi)=h(x)\mathcal{F}_{\mathbb{Z}^n}(g)(\xi),\,\,(x,\xi)\in \mathbb{Z}^n\times \mathbb{T}^n.
\end{eqnarray} As in the previous subsection, we use the notation $h(x)=h(x,\cdot),$ $g(x)=g(x,\cdot),$ and we define,
\begin{eqnarray}
 \mathcal{F}_{\mathbb{Z}^n}(g)(\xi)=\sum_{x\in \mathbb{Z}^n}e^{-i2\pi x\cdot \xi}g(x).
\end{eqnarray}

 \begin{theorem}\label{VishcharZn} Let $0 < s \leq 1$ and let $\mathfrak{t}_a:L^p(\mathbb{Z}^n, L^2(Y, \nu)) \rightarrow L^p(\mathbb{Z}^n, L^2(Y,\nu)), \, 1 \leq p < \infty$ be a vector-valued discrete pseudo-differential operators associated with the operator valued symbol $a: \mathbb{Z}^n \times \mathbb{T}^n \rightarrow \mathcal{L}(L^2(Y, \nu )).$ Then $\mathfrak{t}_a$ is $s$-nuclear if and only if  
 \begin{equation} \label{18vish2}
     a(x, \xi)= e^{-i2\pi x \cdot \xi}\left( \sum_{k} (h_k \otimes \mathcal{F}_{\mathbb{Z}^n} g_k)(x, -\xi)\right)\,\,\, \text{a.e.}\,\, x \in \mathbb{Z}^n, \xi \in \mathbb{T}^n,,
 \end{equation}
 where $\{g_k\}_k$ and $\{h_k\}_k$ are two sequences in $L^p(\mathbb{Z}^n, L^2(Y, \nu))$ and  $L^{p'}(\mathbb{Z}^n, L^2(Y, \nu))$ respectively such that $\sum_{k=1}^\infty \|h_k\|^s_{L^{p'}(\mathbb{Z}^n, L^2(Y, \nu))} \|g_k\|^s_{L^p(\mathbb{Z}^n, L^2(Y, \nu))} <\infty$ and the convergence in \eqref{18vish2} will be understood in sense of the usual operator norm convergence. 
 \end{theorem}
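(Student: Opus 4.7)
The argument mirrors the proof of Theorem \ref{chartn}, with the roles of $\mathbb{T}^n$ and $\mathbb{Z}^n$ swapped: the position variable now lives in $\mathbb{Z}^n$ and the frequency variable in $\mathbb{T}^n$. The main tool is again Delgado's characterisation (Theorem \ref{DelTheorem2.8}), applied on $L^p(\mathbb{Z}^n,L^2(Y,\nu))$, i.e.\ with $\mu$ equal to the counting measure on $\mathbb{Z}^n$.

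\textbf{Sufficiency.} Assume the symbol is given by \eqref{18vish2}. I would substitute this representation into the defining formula for $\mathfrak{t}_a f(x)$, cancel the factors $e^{i2\pi x\cdot\xi}\cdot e^{-i2\pi x\cdot\xi}$, expand $\mathcal{F}_{\mathbb{Z}^n}g_k(-\xi)=\sum_{z\in\mathbb{Z}^n}e^{i2\pi z\cdot\xi}g_k(z)$ and $\mathcal{F}_{\mathbb{Z}^n}f(\xi)=\sum_{y\in\mathbb{Z}^n}e^{-i2\pi y\cdot\xi}f(y)$, and interchange the two summations with the integral over $\xi$; this is legitimate by the absolute summability $\sum_k\|h_k\|_{L^{p'}}\|g_k\|_{L^p}<\infty$ together with H\"older's inequality, in direct analogy with the final computation in the proof of Theorem \ref{chartn}. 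The integral $\int_{\mathbb{T}^n}e^{i2\pi(z-y)\cdot\xi}d\xi$ collapses by orthogonality to $\delta_{z,y}$, leaving the kernel form $\mathfrak{t}_a f(x)=\sum_{y\in\mathbb{Z}^n}\bigl(\sum_k(h_k\otimes g_k)(x,y)\bigr)f(y)$. Theorem \ref{DelTheorem2.8} then exhibits $\mathfrak{t}_a$ as $s$-nuclear.

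\textbf{Necessity.} Assume $\mathfrak{t}_a$ is $s$-nuclear. Theorem \ref{DelTheorem2.8} produces sequences $\{h_k\}\subset L^{p'}(\mathbb{Z}^n,L^2(Y,\nu))$ and $\{g_k\}\subset L^p(\mathbb{Z}^n,L^2(Y,\nu))$ with $\sum_k\|h_k\|^s\|g_k\|^s<\infty$ and $\mathfrak{t}_a f(x)=\sum_{y\in\mathbb{Z}^n}\bigl(\sum_k(h_k\otimes g_k)(x,y)\bigr)f(y)$. I would then test this identity against $f(y)=\delta_{y,y_0}v$ for each $y_0\in\mathbb{Z}^n$ and arbitrary $v\in L^2(Y,\nu)$, the discrete analogue of the pure exponential $f_\xi$ used in Theorem \ref{chartn}. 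Since $\mathcal{F}_{\mathbb{Z}^n}f(\xi)=e^{-i2\pi y_0\cdot\xi}v$, the defining integral for $\mathfrak{t}_a$ yields $\int_{\mathbb{T}^n}e^{i2\pi(x-y_0)\cdot\xi}a(x,\xi)v\,d\xi$, while the kernel formula yields $\sum_k(h_k\otimes g_k)(x,y_0)v$. Equating these expressions identifies the Fourier coefficient, in the variable $\xi$, of the $L^2(Y,\nu)$-valued function $\xi\mapsto a(x,\xi)v$ at the integer index $y_0-x$. Letting $y_0$ range over $\mathbb{Z}^n$ pins down every Fourier coefficient, and Fourier inversion on $\mathbb{T}^n$ then reconstructs $a(x,\xi)v$ as $e^{-i2\pi x\cdot\xi}\sum_{y_0}e^{i2\pi y_0\cdot\xi}\sum_k(h_k\otimes g_k)(x,y_0)v$; interchanging the $k$- and $y_0$-summations recognises the inner sum as $\mathcal{F}_{\mathbb{Z}^n}g_k(-\xi)$, producing the desired form \eqref{18vish2}.

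\textbf{Main obstacle.} The delicate points are the repeated interchanges of $\sum_k$, $\sum_y$ and $\int_{\mathbb{T}^n}$ and the legitimacy of Fourier inversion on $\mathbb{T}^n$ applied to the $L^2(Y,\nu)$-valued function $\xi\mapsto a(x,\xi)v$. Both are controlled by the absolute summability hypothesis combined with the vector-valued Hausdorff--Young inequality already invoked in Theorem \ref{decaying}; once the duality dictionary $\mathbb{T}^n\leftrightarrow\mathbb{Z}^n$ is installed, the manipulations transfer essentially verbatim from the periodic setting and no genuinely new analytic ingredient is required.
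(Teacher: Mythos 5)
Your proposal follows the same route as the paper's own proof: invoke Delgado's Theorem \ref{DelTheorem2.8} on $L^p(\mathbb{Z}^n,L^2(Y,\nu))$, test the resulting kernel identity against discrete delta functions $f(y)=\delta_{y,l}v$ to identify $\int_{\mathbb{T}^n}e^{i2\pi(x-l)\cdot\xi}a(x,\xi)v\,d\xi$ with $\sum_k(h_k\otimes g_k)(x,l)v$, and then apply Fourier inversion on $\mathbb{T}^n$ (re-indexing $l=x-m$) to recover the claimed form of $a(x,\xi)v$; the converse direction likewise reduces to the kernel criterion exactly as the paper does. The steps, including the final interchange of the $k$- and $l$-sums to reveal $\mathcal{F}_{\mathbb{Z}^n}g_k(-\xi)$, match the published argument essentially verbatim.
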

 \begin{proof} Let $\mathfrak{t}_a$ be a $s$-nuclear, $0<s \leq 1$ operator. Then, it follows from Theorem \ref{DelTheorem2.8} that there exist two sequences $\{h_k\} \in L^p(\mathbb{Z}^n, L^2(Y, \nu))$ and $\{g_k\}_k \in L^{p'}(\mathbb{Z}^n, L^2(Y, \nu))$ with $$\sum_{k=1}^\infty \|h_k\|^s_{L^{p'}(\mathbb{Z}^n, L^2(Y, \nu))} \|g_k\|^s_{L^p(\mathbb{Z}^n, L^2(Y, \nu))} <\infty$$ such that 
 $$\mathfrak{t}_af(x)= \sum_{y \in \mathbb{Z}^n} \sum_{k} (h_k \otimes g_k)(x, y) f(y)\,\,\,\,\,\text{a.e.}\, x$$
 On the other hand by the definition of $\mathfrak{t}_a,$ we have
 $$\mathfrak{t}_af(x)= \int_{ \mathbb{T}^n} a(x, \xi)\, e^{2 \pi i x \cdot \xi} \mathcal{F}_{\mathbb{Z}^n}f(\xi)\, d\xi = \int \limits_{\mathbb{T}^n} a(x, \xi) e^{2 \pi i x \cdot \xi}\sum_{y \in \mathbb{Z}^n} e^{-2 \pi i y \cdot \xi} f(y) \, d\xi.$$
 Thus, we get that the equality 
 \begin{equation} \label{Vish39}
     \,\,\, \,\,\, \,\,\, \,\,\, \,\,\, \,\,\, \,\,\, \,\,\, \,\,\, \int \limits_{\mathbb{T}^n} a(x, \xi) \sum_{y \in \mathbb{Z}^n} e^{2 \pi i (x-y) \cdot \xi} f(y) \, d\xi = \sum_{y \in \mathbb{Z}^n} \sum_{k} (h_k \otimes g_k)(x, y) f(y),
 \end{equation} is true for every $f \in L^p(\mathbb{Z}^n, L^2(Y, \nu)).$ Now, for fix $v \in L^2(Y, \nu)$ and $l \in \mathbb{Z}^n,$ 
 consider the function $f_l \in L^p(\mathbb{Z}^n, L^2(Y, \nu))$ given by  
  $$f_l(y)= \begin{cases} v, & \text{if} \,\,y=l, \\ 0, & \text{otherwise}.\end{cases}$$
  For $f= f_l,$ equality \eqref{Vish39} in turn gives 
  $$ \mathcal{F}_{\mathbb{Z}^n}^{-1}[a(x, \cdot )v](x-l):= \int \limits_{\mathbb{T}^n} e^{2 \pi i (x-l)} a(x, \xi) v \, d\xi = \sum_{k} (h_k \otimes g_k)(x, l) v.$$
In view of this, by using the Fourier inversion formula for the discrete Fourier transform we get 
  \begin{align*}
      a(x, \xi)v & =\mathcal{F}_{\mathbb{Z}^n}[\mathcal{F}^{-1}_{\mathbb{Z}^n}a(x, \cdot)v](\xi)=  \sum_{l \in \mathbb{Z}^n} e^{-2 \pi i \xi \cdot l} \mathcal{F}^{-1}_{\mathbb{Z}^n}(a(x, \cdot)v)(l)  \\ &=  \sum_{l \in \mathbb{Z}^n} e^{-2 \pi i \xi \cdot (x-l)} \mathcal{F}^{-1}_{\mathbb{Z}^n}(a(x, \cdot)v)(x-l) \\&= \sum_{l \in \mathbb{Z}^n} e^{-2 \pi i \xi \cdot (x-l)} \sum_{k } (h_k \otimes g_k)(x, l)v \\ &= e^{-2 \pi i \xi \cdot x}  \sum_{k} h_k(x) \left( \sum_{l \in \mathbb{Z}^n} e^{2 \pi i \xi \cdot l} g_k(l) \right)v  \\& = e^{-2 \pi i \xi \cdot x} \sum_{k} h_k(x) \mathcal{F}_{\mathbb{Z}^n}g_k(-\xi) v \\ &= e^{-2 \pi i \xi \cdot x} \sum_{k} (h_k \otimes \mathcal{F}_{\mathbb{Z}^n} g_k)(x, -\xi)v \,\,\,\,\, \text{for every}\, v \in L^2(Y, \nu).
  \end{align*}
Therefore, 
         $$  a(x, \xi)= e^{-2 \pi i \xi \cdot x} \sum_{k} (h_k \otimes \mathcal{F}_{\mathbb{Z}^n}g_k)(x, -\xi),\,\,\, \text{a.e.}\,\, x \in \mathbb{Z}^n, \xi \in \mathbb{T}^n.$$
Conversely, assume that  there exist two sequences  $\{g_k\}_k \subset  L^p(\mathbb{Z}^n, L^2(Y, \nu))$ and  $\{h_k\}_k \subset L^{p'}(\mathbb{Z}^n, L^2(Y, \nu))$ with $\sum_{k=1}^\infty \|h_k\|^s_{L^{p'}(\mathbb{Z}^n, L^2(Y, \nu))} \|g_k\|^s_{L^p(\mathbb{Z}^n, L^2(Y, \nu))} <\infty$ such that  $$ a(x, \xi)= e^{-i2\pi x \cdot \xi} \sum_{k} (h_k \otimes \mathcal{F}_{\mathbb{Z}^n} g_k)(x, -\xi)\,\,\, x \in \mathbb{Z}^n, \xi \in \mathbb{T}^n.$$
     Now, since $a_N(x, \xi):= \lim_{N \rightarrow \infty} e^{-i2\pi x \cdot \xi} \sum_{k \leq N} (h_k \otimes \mathcal{F}_{\mathbb{Z}^n} g_k)(x, -\xi) $ converge to $a(x, \xi)$ in operator norm we have, for any $f \in L^p(\mathbb{Z}^n, L^2(Y, \nu)),$ 
     \begin{align*}
         \mathfrak{t}_a f(x) &= \int \limits_{\mathbb{T}^n} e^{2 \pi i  \xi \cdot x} a(x, \xi) \mathcal{F}_{\mathbb{Z}^n}f(\xi)\, d\xi= \int \limits_{\mathbb{T}^n} e^{2 \pi i  \xi \cdot x} \lim_{N \rightarrow \infty} a_N(x, \xi) \mathcal{F}_{\mathbb{Z}^n}f(\xi)\, d\xi \\ &= \int \limits_{\mathbb{T}^n} \left( \lim_{N \rightarrow \infty}\sum_{k \leq N} (h_k \otimes \mathcal{F}_{\mathbb{Z}^n}g_k)(x, -\xi) \right) \mathcal{F}_{\mathbb{Z}^n}f(\xi)\, d\xi \\ &=  \left( \lim_{N \rightarrow \infty}\sum_{k \leq N} \int \limits_{\mathbb{T}^n} (h_k \otimes \mathcal{F}_{\mathbb{Z}^n}g_k)(x, -\xi) \right) \mathcal{F}_{\mathbb{Z}^n}f(\xi)\, d\xi \\ &=   \lim_{N \rightarrow \infty}\sum_{k \leq N} \left(\int \limits_{\mathbb{T}^n} h_k(x) \lim_{M \rightarrow \infty} \sum_{|y| \leq M} e^{i2\pi y \cdot \xi } g_k(y) \right) \mathcal{F}_{\mathbb{Z}^n}f(\xi)\, d\xi\\ &=   \lim_{N \rightarrow \infty}\sum_{k \leq N} \left( h_k(x) \lim_{M \rightarrow \infty} \sum_{|y| \leq M}  g_k(y) \right) \left( \int \limits_{\mathbb{T}^n} e^{i2\pi y \cdot \xi } \mathcal{F}_{\mathbb{Z}^n}f(\xi)\, d\xi \right)\\ &=   \lim_{N \rightarrow \infty}\lim_{M \rightarrow \infty} \sum_{k \leq N} \sum_{|y| \leq M} (h_k \otimes g_k)(x,y) \left( \int \limits_{\mathbb{T}^n} e^{i2\pi y \cdot \xi } \mathcal{F}_{\mathbb{Z}^n}f(\xi)\, d\xi \right) \\ &=   \lim_{N \rightarrow \infty}\lim_{M \rightarrow \infty}  \sum_{|y| \leq M}\sum_{k \leq N} (h_k \otimes g_k)(x,y) f(y)  \\&= \sum_{y \in \mathbb{Z}^n} \left( \sum_{k} (h_k \otimes g_k)(x, y) \right) f(y).  
     \end{align*} 
Next, by applying Theorem\ref{DelTheorem2.8} we get that $\mathfrak{t}_a$ is a $s$-nuclear operator on the vector-valued Lebesgue space $L^p(\mathbb{Z}^n, L^2(Y, \nu)).$
     \end{proof}
Now, we present the following result. The proof is similar to the proof of Theorem \ref{decaying}.
 \begin{theorem}\label{decaying2}
  Let $1<p\leq 2,$     and let $\mathfrak{t}_\sigma$ be the vector-valued operator  associated to $\sigma(\cdot,\cdot).$ If $\mathfrak{t}_\sigma:L^{p}(\mathbb{Z}^n,L^2(Y,\nu))\rightarrow L^{p}(\mathbb{Z}^n,L^2(Y,\nu))$ is nuclear. Then $\Vert a(x,\xi)\Vert_{\textnormal{op}}\in L^{p'}(\mathbb{Z}^n\times \mathbb{T}^n),$ this means that
 \begin{equation}
     \Vert \Vert  a(x,\xi)\Vert_{\textnormal{op}}\Vert_{ L^{p'}(\mathbb{Z}^n\times \mathbb{T}^n)}:=\left( \int\limits_{\mathbb{T}^n}\sum\limits_{\xi\in\mathbb{Z}^n}\Vert a(x,\xi)\Vert^{p'}_{\textnormal{op}}dx\right)^{\frac{1}{p'}}<\infty.
 \end{equation} 
\end{theorem}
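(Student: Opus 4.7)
The strategy mirrors that of Theorem~\ref{decaying}, with the roles of $\mathbb{T}^n$ and $\mathbb{Z}^n$ interchanged: the spatial variable now lives on $\mathbb{Z}^n$, the frequency variable on $\mathbb{T}^n$, and the Hausdorff--Young inequality must be applied to the discrete Fourier transform $\mathcal{F}_{\mathbb{Z}^n}:\ell^{p}(\mathbb{Z}^n) \to L^{p'}(\mathbb{T}^n)$ in place of $\mathscr{F}:L^p(\mathbb{T}^n)\to \ell^{p'}(\mathbb{Z}^n)$.

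The first step is to feed the nuclear operator $\mathfrak{t}_\sigma$ into Theorem~\ref{VishcharZn}, which yields sequences $\{g_k\}_k\subset L^p(\mathbb{Z}^n, L^2(Y,\nu))$ and $\{h_k\}_k\subset L^{p'}(\mathbb{Z}^n, L^2(Y,\nu))$ with $\sum_k \|h_k\|_{L^{p'}} \|g_k\|_{L^p}<\infty$ and
\begin{equation*}
\sigma(x,\xi) \;=\; e^{-i2\pi x\cdot\xi}\sum_{k=1}^{\infty}(h_k\otimes \mathcal{F}_{\mathbb{Z}^n}g_k)(x,-\xi).
\end{equation*}
Each summand is a rank-one operator on $L^2(Y,\nu)$ whose operator norm equals $\|h_k(x)\|_{L^2(Y,\nu)}\,\|\mathcal{F}_{\mathbb{Z}^n}g_k(-\xi)\|_{L^2(Y,\nu)}$, so the triangle inequality gives the pointwise bound
\begin{equation*}
\|\sigma(x,\xi)\|_{op} \;\le\; \sum_{k=1}^{\infty}\|h_k(x)\|_{L^2(Y,\nu)}\,\|\mathcal{F}_{\mathbb{Z}^n}g_k(-\xi)\|_{L^2(Y,\nu)}.
\end{equation*}

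Next, I would take successive $L^{p'}$-norms and pull the sum out via Minkowski's inequality. Taking the $L^{p'}(\mathbb{T}^n_\xi)$-norm first and invoking the vector-valued Hausdorff--Young inequality for $\mathcal{F}_{\mathbb{Z}^n}$---which for Hilbert space-valued functions in the range $1<p\le 2$ delivers $\|\mathcal{F}_{\mathbb{Z}^n}g_k\|_{L^{p'}(\mathbb{T}^n,L^2(Y,\nu))}\le \|g_k\|_{L^p(\mathbb{Z}^n,L^2(Y,\nu))}$---and then taking the $\ell^{p'}(\mathbb{Z}^n_x)$-norm and applying Minkowski once more, I arrive at
\begin{equation*}
\bigl\| \|\sigma(x,\xi)\|_{op} \bigr\|_{L^{p'}(\mathbb{Z}^n\times \mathbb{T}^n)} \;\le\; \sum_{k=1}^{\infty}\|h_k\|_{L^{p'}(\mathbb{Z}^n,L^2(Y,\nu))}\,\|g_k\|_{L^p(\mathbb{Z}^n,L^2(Y,\nu))} \;<\;\infty,
\end{equation*}
which is the desired conclusion.

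The argument is essentially mechanical once Theorem~\ref{VishcharZn} is available, so no genuine obstacle is anticipated. The only technical point is the discrete Hilbert space-valued Hausdorff--Young inequality, which follows either by interpolation between the trivial embedding $\ell^1(\mathbb{Z}^n,H)\hookrightarrow L^\infty(\mathbb{T}^n,H)$ and the isometric Plancherel identity on $\ell^2(\mathbb{Z}^n,H)$, or directly by Minkowski's inequality applied in the auxiliary variable $y\in Y$ combined with the scalar Hausdorff--Young bound.
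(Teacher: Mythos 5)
Your proof is correct and follows precisely the route the paper intends: the paper itself gives no separate argument but remarks that the proof is ``similar to the proof of Theorem~\ref{decaying},'' and you have carried out exactly that transposition---applying Theorem~\ref{VishcharZn} to obtain the rank-one kernel decomposition, bounding the operator norm pointwise by the triangle inequality, and then closing with Minkowski's inequality and the discrete Hilbert-space-valued Hausdorff--Young bound $\mathcal{F}_{\mathbb{Z}^n}:\ell^p(\mathbb{Z}^n,L^2(Y,\nu))\to L^{p'}(\mathbb{T}^n,L^2(Y,\nu))$ in place of its toroidal counterpart. Your remark on how to derive that vector-valued Hausdorff--Young inequality (interpolation between the $\ell^1\to L^\infty$ endpoint and Plancherel, or Minkowski in the $Y$-variable combined with the scalar estimate) is a welcome addition that the paper leaves implicit.
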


\subsection{Nuclear  pseudo-differential operators on $L^p(\mathbb{T}^n,H).$ Sufficient conditions.}

Let $H$ be a separable Hilbert space with infinite dimension. We fix an arbitrary orthonormal basis  $$B=\{e_\alpha:\alpha \in I\},$$  of $H.$ Let us recall that $H$ is separable if and only if  $|I|=\aleph_0.$ Our starting point is the following result which will be our main tool for the proof of our index formulae.

\begin{theorem}\label{TheoremNuclearSufficientCondition}
Let $0<s\leq 1$ and let $A:C^\infty(\mathbb{T}^n,H)\rightarrow C^\infty(\mathbb{T}^n,H)$ be a vector-valued periodic pseudo-differential operator with operator-valued symbol $\sigma:\mathbb{T}^n\times \mathbb{Z}^n\rightarrow \mathcal{L}(H).$ If   $1\leq p_i< \infty$, $i=1,2$ with the condition
\begin{equation}\label{Hiponsymbol}
    \Vert\{ \Vert \sigma(x,\xi)e_\alpha\Vert^s_{L^{p_2}(\mathbb{T}^n_x,H)}  \}_{(\xi,\alpha)} \Vert_{\ell^1(\mathbb{Z}^n\times I)}:= \sum_{\xi\in\mathbb{Z}^n}\sum_{\alpha\in I}\Vert \sigma(x,\xi)e_\alpha\Vert^s_{L^{p_2}(\mathbb{T}^n_x,H)}<\infty.
\end{equation}    The operator $A:L^{p_1}(\mathbb{T}^n,H)\rightarrow L^{p_2}(\mathbb{T}^n,H)$ extends to a $s$-nuclear operator. If $1\leq p<\infty$ and $p=p_1=p_2,$ in terms of the basis $B,$ the nuclear trace of $A$ is given by
\begin{equation}\label{nTraceformula1}
    \textnormal{\textbf{n-Tr}}(A)=\sum_{\xi\in\mathbb{Z}^n}\sum_{\alpha\in I}\int\limits_{\mathbb{T}^n}\langle \sigma(x,\xi)e_\alpha,e_\alpha\rangle_{H}dx. 
\end{equation} In particular, if $A$ is a Fourier multiplier with symbol $\sigma(\xi)$ then $\sigma(\xi)\in \mathfrak{S}_{s}(H),$ the Schatten-von Neumann class of order $s,$ and
\begin{equation}\label{nucleartracemultiplier}
    \textnormal{\textbf{n-Tr}}(A)=\sum_{\xi\in\mathbb{Z}^n}\textnormal{\textbf{n-Tr}}(\sigma(\xi)).
\end{equation}
\end{theorem}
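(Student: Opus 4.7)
My plan is to produce an explicit nuclear decomposition of $A$ indexed by $\mathbb{Z}^n\times I$ by combining the Fourier inversion on $\mathbb{T}^n$ with the orthonormal expansion in $B$. For each pair $(\xi,\alpha)$, I set
\begin{equation*}
h_{\xi,\alpha}(x):=e^{i2\pi x\cdot\xi}\sigma(x,\xi)e_\alpha\in L^{p_2}(\mathbb{T}^n,H),
\end{equation*}
and define the functional $g_{\xi,\alpha}\in (L^{p_1}(\mathbb{T}^n,H))'$ by $g_{\xi,\alpha}(f):=\langle\widehat{f}(\xi),e_\alpha\rangle_H$. Since $|g_{\xi,\alpha}(f)|\leq\|\widehat{f}(\xi)\|_H\leq\|f\|_{L^{p_1}(\mathbb{T}^n,H)}$ we have $\|g_{\xi,\alpha}\|\leq 1$, and clearly $\|h_{\xi,\alpha}\|_{L^{p_2}(\mathbb{T}^n,H)}=\|\sigma(\cdot,\xi)e_\alpha\|_{L^{p_2}(\mathbb{T}^n_x,H)}$.

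Next, by inserting the basis expansion $\widehat{f}(\xi)=\sum_{\alpha}\langle\widehat{f}(\xi),e_\alpha\rangle_H\, e_\alpha$ into the defining formula for $T_\sigma$ and interchanging the order of summation, I would arrive at
\begin{equation*}
Af(x)=\sum_{\xi\in\mathbb{Z}^n}\sum_{\alpha\in I}g_{\xi,\alpha}(f)\,h_{\xi,\alpha}(x),
\end{equation*}
which is already a nuclear representation. The $s$-nuclearity quantity $\sum\|g_{\xi,\alpha}\|^s\|h_{\xi,\alpha}\|^s$ is then controlled by the left-hand side of \eqref{Hiponsymbol}, delivering the desired $s$-nuclear extension $A:L^{p_1}(\mathbb{T}^n,H)\to L^{p_2}(\mathbb{T}^n,H)$.

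For the trace formula with $p_1=p_2=p$, the approximation property of $L^p(\mathbb{T}^n,H)$ (recalled in the introduction) makes the nuclear trace independent of the decomposition and equal to $\sum_{\xi,\alpha}g_{\xi,\alpha}(h_{\xi,\alpha})$. A short computation using $\widehat{h_{\xi,\alpha}}(\xi)=\int_{\mathbb{T}^n}\sigma(y,\xi)e_\alpha\,dy$ gives $g_{\xi,\alpha}(h_{\xi,\alpha})=\int_{\mathbb{T}^n}\langle\sigma(x,\xi)e_\alpha,e_\alpha\rangle_H\,dx$, yielding \eqref{nTraceformula1}. In the Fourier multiplier case the hypothesis collapses to $\sum_{\xi,\alpha}\|\sigma(\xi)e_\alpha\|_H^s<\infty$; for each fixed $\xi$ the expansion $\sigma(\xi)v=\sum_\alpha\langle v,e_\alpha\rangle_H\,\sigma(\xi)e_\alpha$ is itself an $s$-nuclear decomposition of $\sigma(\xi)$ in $\mathcal{L}(H)$, so $\sigma(\xi)\in\mathfrak{N}_s(H)=\mathfrak{S}_s(H)$ with $\textnormal{n-Tr}(\sigma(\xi))=\sum_\alpha\langle\sigma(\xi)e_\alpha,e_\alpha\rangle_H$, and summing over $\xi$ recovers \eqref{nucleartracemultiplier}.

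The main obstacle I anticipate is the rigorous justification of the interchange of the double sum producing the nuclear representation, since for $s<1$ only $\ell^s$-summability (not $\ell^1$-summability) is directly available from \eqref{Hiponsymbol}. I would handle this by first verifying the representation pointwise on the dense subspace of trigonometric polynomials with values in the finite-dimensional span of a finite subfamily of $B$ (where both sums are automatically finite), and then extending by continuity using the bound $\|A\|_{\mathfrak{N}_s}^s\leq\sum_{\xi,\alpha}\|\sigma(\cdot,\xi)e_\alpha\|_{L^{p_2}(\mathbb{T}^n,H)}^s$ which has already been established from the formal decomposition.
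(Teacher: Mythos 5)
Your proposal is essentially the paper's own proof: the same nuclear decomposition $h_{\xi,\alpha}(x)=e^{i2\pi x\cdot\xi}\sigma(x,\xi)e_\alpha$, the same functionals $g_{\xi,\alpha}(f)=\langle\widehat{f}(\xi),e_\alpha\rangle_H$, the same bounds $\|g_{\xi,\alpha}\|\leq 1$ and $\|h_{\xi,\alpha}\|_{L^{p_2}}=\|\sigma(\cdot,\xi)e_\alpha\|_{L^{p_2}}$, the same trace computation, and the same reduction to $\sum_\alpha\|\sigma(\xi)e_\alpha\|_H^s<\infty$ for the Fourier multiplier case. Your added worry about the interchange of sums for $s<1$ is in fact not an obstacle: for $0<s\leq 1$, $\ell^s$-summability of the nonnegative quantities $\|g_{\xi,\alpha}\|\,\|h_{\xi,\alpha}\|$ already implies $\ell^1$-summability (since $\ell^s\subset\ell^1$), so the series $\sum_{\xi,\alpha}g_{\xi,\alpha}(f)h_{\xi,\alpha}$ converges absolutely in $L^{p_2}(\mathbb{T}^n,H)$ and your density argument, while harmless, is unnecessary.
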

\begin{proof} Let $f\in C^{\infty}(\mathbb{T}^n,H).$ For $\xi\in\mathbb{Z}^n,$ the Fourier coefficient $\widehat{f}(\xi)$ can be written as,
\begin{equation}
    \widehat{f}(\xi)=\sum_{\alpha\in I}\widehat{f}(\alpha,\xi)e_\alpha,\,\,\,\widehat{f}(\alpha,\xi):=\langle\widehat{f}(\xi),e_\alpha \rangle_H=\int\limits_{\mathbb{T}^n}e^{-i2\pi x\cdot \xi}\langle \widehat{f}(\xi),e_\alpha \rangle_{H}.
\end{equation} In terms of the basis $B,$ the operator $A$ has the form,
\begin{equation*}
    Af(x)=\sum_{\xi\in\mathbb{Z}^n}e^{i2\pi x\cdot \xi}\sigma(x,\xi)\widehat{f}(\xi)=\sum_{\xi\in\mathbb{Z}^n}\sum_{\alpha\in I}e^{i2\pi x\cdot \xi}\widehat{f}(\alpha,\xi)\sigma(x,\xi)e_\alpha.
\end{equation*} In fact, the fact that $\sigma(x,\xi)\in \mathcal{L}(H)$ for all $(x,\xi)\in \mathbb{T}^n\times \mathbb{Z}^n,$ implies that
\begin{eqnarray*}
\sigma(x,\xi)\widehat{f}(\xi)=\sum_{\alpha\in I}\widehat{f}(\alpha,\xi)\sigma(x,\xi)e_\alpha,
\end{eqnarray*} where the convergence is understood in the topology induced by the norm of $H$. Now, we will prove that the hypothesis \eqref{Hiponsymbol} does of the following sequences $\{g_{\alpha,\xi}\}\subset L^{p_1'}(\mathbb{T}^n,H)$ and $\{h_{\alpha,\xi}\}\subset L^{p_2}(\mathbb{T}^n,H)$ where
\begin{equation*}
    \langle f, g_{\alpha,\xi}\rangle:=\widehat{f}(\alpha,\xi),\,\,h_{\alpha,\xi}(x):=e^{i2\pi x\cdot \xi}\sigma(x,\xi)e_\alpha \in H,
\end{equation*}  a nuclear decomposition of $A.$ In fact, it will be suffice to prove that
$$ 
\sum_{\xi\in\mathbb{Z}^n}\sum_{\alpha\in I} \Vert g_{\alpha,\xi}\Vert^s_{ L^{p_1'}(\mathbb{T}^n,H)   } \Vert h_{\alpha,\xi}\Vert^s_{L^{p_2}(\mathbb{T}^n_x,H)}<\infty.$$ To do so, let us observe that
\begin{align*}
     \Vert g_{\alpha,\xi}\Vert_{ L^{p_1'}(\mathbb{T}^n,H)} &=\sup_{\Vert f\Vert_{  L^{p_1}(\mathbb{T}^n,H)=1  }} |\langle f, g_{\alpha,\xi}\rangle| \\
     &=\sup_{\Vert f\Vert_{  L^{p_1}(\mathbb{T}^n,H)=1  }} |\langle \widehat{f}(\xi), e_\alpha\rangle_{H}| \\
     &\leq \sup_{\Vert f\Vert_{  L^{p_1}(\mathbb{T}^n,H)=1  }} \Vert  \widehat{f}(\xi)\Vert_{H}\Vert e_\alpha\Vert_{H}\\&=\sup_{\Vert f\Vert_{  L^{p_1}(\mathbb{T}^n,H)=1  }} \Vert  \widehat{f}(\xi)\Vert_{H},
\end{align*} where we have used that $\left\Vert e_\alpha\right\Vert_{H}=1$ for all $ \alpha.$ The estimate 
$$\Vert\widehat{f}(\xi)\Vert_{H}=\left\Vert\int\limits_{\mathbb{T}^n}e^{-i2\pi x\cdot \xi}f(x)dx\right\Vert_{H}\leq\int\limits_{\mathbb{T}^n}\left\Vert f(x)\right\Vert_{H}dx\leq \Vert f \Vert_{  L^{p_1}(\mathbb{T}^n,H)}, $$ implies that $\Vert g_{\alpha,\xi}\Vert_{ L^{p_1'}(\mathbb{T}^n,H)}\leq 1.$ So, we have,
\begin{align*}
    \sum_{\xi\in\mathbb{Z}^n}\sum_{\alpha\in I} \Vert g_{\alpha,\xi}\Vert^s_{ L^{p_1'}(\mathbb{T}^n,H)   }  &\Vert h_{\alpha,\xi}\Vert^s_{L^{p_2}(\mathbb{T}^n,H)}\\
    &\leq \sum_{\xi\in\mathbb{Z}^n}\sum_{\alpha\in I}  \Vert h_{\alpha,\xi}\Vert^s_{L^{p_2}(\mathbb{T}^n,H)}\\
    &= \sum_{\xi\in\mathbb{Z}^n}\sum_{\alpha\in I}  \Vert \sigma(x,\xi)e_\alpha\Vert^s_{L^{p_2}(\mathbb{T}^n_x,H)}<\infty\,\,\,\,\,\,.
\end{align*} Therefore, we have proved the $s$-nuclearity of the operator $A:L^{p_1}(\mathbb{T}^n,H)\rightarrow L^{p_2}(\mathbb{T}^n,H)$. Now, if $p=p_1=p_2,$ the nuclear trace of $A$ is given by
\begin{align*}
   \,\,\,\,\,\,\,\,\,\,\,\,\,\,\,\,\,\,\,\,\,\,\,\,\,\,\,\,\,\,\,\,\,\,\,\,\,\,\,\,\,\,\,\,\,\, \textnormal{\textbf{n-Tr}}(A) &=\sum_{\xi\in\mathbb{Z}^n}\sum_{\alpha\in I}\langle h_{\alpha,\xi},g_{\alpha,\xi} \rangle\\ &=\sum_{\xi\in\mathbb{Z}^n}\sum_{\alpha\in I}\langle\widehat{h}_{\alpha,\xi}(\xi),e_\alpha \rangle_{H}\\&=\sum_{\xi\in\mathbb{Z}^n}\sum_{\alpha\in I}\langle \int\limits_{\mathbb{T}^n}h_{\alpha,\xi}(x)e^{-i2\pi x\cdot \xi}dx,e_\alpha \rangle_{H}\\
     &=\sum_{\xi\in\mathbb{Z}^n}\sum_{\alpha\in I}\langle \int\limits_{\mathbb{T}^n}e^{i2\pi x\cdot \xi}\sigma(x,\xi)e^{-i2\pi x\cdot \xi}dx,e_\alpha \rangle_{H}\\
     &=\sum_{\xi\in\mathbb{Z}^n}\sum_{\alpha\in I} \int\limits_{\mathbb{T}^n}\langle\sigma(x,\xi),e_\alpha \rangle_{H}dx.
\end{align*} Now, we assume that $A$ is a Fourier multiplier. Since, $\sigma(\xi)\in \mathcal{L}(H)$ for all $\xi\in  \mathbb{Z}^n,$ we have
\begin{eqnarray}
\sigma(\xi)v=\sum_{\alpha\in I}\langle v,e_\alpha \rangle_H \sigma(\xi)e_\alpha.
\end{eqnarray} We claim that $\sigma(\xi)\in \mathfrak{S}_{s}(H)=\mathfrak{N}_s(H,H).$ In fact, we only need to prove that the sequences $\{e_{\alpha}'\}\subset H'$ and $\{h_{\alpha,\xi}\}\subset H$ provide a nuclear decomposition for $\sigma(\xi),$ where $e_{\alpha}'(v):=\langle v,e_\alpha \rangle_H$ and $h_{\alpha,\xi}:=\sigma(\xi)e_\alpha.$ For this, it will be suffice to prove that $$ \sum_{\alpha\in I}\Vert e_\alpha'\Vert_{H}^s\Vert h_{\alpha,\xi}\Vert_{H'}^s=\sum_{\alpha\in I}\Vert h_{\alpha,\xi}\Vert_{H}^s =\sum_{\alpha\in I}\Vert \sigma(\xi)e_\alpha\Vert_{H}^s <\infty, $$
where we have used that  $\Vert e_\alpha'\Vert_{H'}=1, $ for all $\alpha\in I.$ However, we can do this by taking into account that for Fourier multipliers, the condition \eqref{Hiponsymbol} implies,
\begin{equation*}
 \sum_{\alpha\in I}\Vert \sigma(\xi)e_\alpha\Vert_{H}^s\leq   \sum_{\xi\in\mathbb{T}^n}\sum_{\alpha\in I}\Vert \sigma(\xi)e_\alpha\Vert^s_{(H)}=\sum_{\xi\in\mathbb{T}^n}\sum_{\alpha\in I}\Vert \sigma(x,\xi)e_\alpha\Vert^s_{L^{p_2}(\mathbb{T}^n_x,H)}<\infty.
\end{equation*} By observing that the nuclear trace of $\sigma(\xi)$ is given by
\begin{align*}
    \textnormal{\textbf{n-Tr}}(\sigma(\xi))=\sum_{\alpha\in I}\langle h_\alpha,e_{\alpha}'\rangle=\sum_{\alpha\in I}\langle \sigma(\xi)e_{\alpha},e_\alpha\rangle_H,\,\,\,\,\,\,\,\,\,\,\,\,\,\,\,\,\,\,\,\,\,\,\,\,\,
\end{align*} from \eqref{nTraceformula1} we conclude that,
\begin{align*}
   \textnormal{\textbf{n-Tr}}(A)&= \sum_{\xi\in\mathbb{Z}^n}\sum_{\alpha\in I} \int\limits_{\mathbb{T}^n}\langle\sigma(\xi),e_\alpha \rangle_{H}dx \\ &= \sum_{\xi\in\mathbb{Z}^n}\sum_{\alpha\in I} \langle\sigma(\xi),e_\alpha \rangle_{H}= \sum_{\xi\in\mathbb{Z}^n}\textnormal{\textbf{n-Tr}}(\sigma(\xi)).
\end{align*} Thus, we finish the proof.

\end{proof}

Let us observe that \eqref{nucleartracemultiplier} suggests that one can extend the trace formula \eqref{nTraceformula1} to a similar expression where does not appear the basis $H.$ This will be useful because we could show, under certain conditions, that  the nuclear trace of a vector-valued pseudo-differential operator depends only on the nuclear trace of the values of its symbol.

\section{Applications to Index theory}\label{indexsection}
\subsection{The index of vector-valued Fourier multipliers}

In this subsection, we show that how to use the  nuclearity results developed in Section \ref{nuche} to the index theory of vector-valued Fourier multipliers.
We are interested in computing the index of Fredholm periodic vector-valued pseudo-differential operators. The following definition will be useful for our further applications.

\begin{definition}\label{mBelliptic}
Let $H$ be a  separable complex  Hilbert space. Let us assume that $m>0$  and let $B=\{e_\alpha:\alpha\in I\}$ be an orthonormal basis of $H.$ We say that a Fourier multiplier  $A:L^2_{\textnormal{fin} \,B}(\mathbb{T}^n,H)\rightarrow L^2(\mathbb{T}^n,H)$ is $(m,B)$-elliptic if, 
\begin{itemize}
    \item the operator-valued symbol $\sigma:=\{\sigma(\xi)\}_{\xi\in\mathbb{Z}^n}$ of $A$ consists of a sequence of densely defined operators  $\sigma(\xi),$ $\xi\in\mathbb{Z}^n$, such that $B\subset \cap_{\xi\in\mathbb{Z}^n}\textnormal{Dom}(\sigma(\xi)),$ and 
    \item  
    \begin{equation}\label{lowerboundeigenvalue}
  \sup_{0<t<\infty} e^{t C (1+|\alpha|+|\xi|)^{m}}\Vert {e^{-t \sigma(\xi)^* \sigma(\xi)}}e_\alpha \Vert_{H},\sup_{0<t<\infty} e^{t C(1+|\alpha|+|\xi|)^{m}}\Vert {e^{-t \sigma(\xi) \sigma(\xi)^*}}e_\alpha \Vert_{H}<\infty,
\end{equation}   for some positive constant $C>0.$
\end{itemize}
\end{definition}
\begin{remark}
 The existence of $(m,B)$-elliptic Fourier multipliers will be considered in the next subsection. Indeed, we will prove that this type of operators arise naturally in the context of  periodic H\"ormander classes.
\end{remark}

 Now, we  assume that $H$ is an infinite-dimensional  separable Hilbert space with a basis  $B=\{e_\alpha:\alpha\in \mathbb{Z}^\kappa\}$  indexed by the discrete set $I=\mathbb{Z}^\kappa$, for some integer $\kappa\in\mathbb{N}.$
The following lemma summarises the nuclear properties of the heat semigroup defined by the Laplacians   $\Delta_a=A^*A$ and $\Delta_b=AA^*$ associated to $A.$
\begin{lemma}
    Let $t>0.$ Let $A$ be a $(m,B)$-elliptic Fourier multiplier. Then, the operators $e^{-t\Delta_a}$ and $e^{-t\Delta_b}$ extend to a $s$-nuclear operators from $L^{p_1}(\mathbb{T}^n,H)$ into $L^{p_2}(\mathbb{T}^n,H),$ for all $0<s\leq 1$ and $1\leq p_i<\infty.$ In this case, their nuclear traces are given by
    \begin{equation}\label{Formulaheatkerneltrace}
        \textnormal{\textbf{n-Tr}}(e^{-t\Delta_a})=\sum_{\xi\in\mathbb{Z}^n} \textnormal{\textbf{n-Tr}}(e^{-t \sigma(\xi)^* \sigma(\xi) }) \textnormal{   and,   } \textnormal{\textbf{n-Tr}}(e^{-t\Delta_b})=\sum_{\xi\in\mathbb{Z}^n} \textnormal{\textbf{n-Tr}}(e^{-t \sigma(\xi) \sigma(\xi)^* }).
    \end{equation}
\end{lemma}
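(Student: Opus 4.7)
The plan is to apply Theorem \ref{TheoremNuclearSufficientCondition} directly, using Lemma \ref{elpapadeloslemma}(6) to identify the symbols of the heat semigroups, and then to extract the summability of the required series from the $(m,B)$-ellipticity condition \eqref{lowerboundeigenvalue}.

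First I would record that by Lemma \ref{elpapadeloslemma}(6) the operator $e^{-t\Delta_a}$ is a Fourier multiplier with operator-valued symbol $\xi\mapsto a(t)(\xi):=e^{-t\sigma(\xi)^{*}\sigma(\xi)}$, and similarly $e^{-t\Delta_b}$ has symbol $\xi\mapsto b(t)(\xi):=e^{-t\sigma(\xi)\sigma(\xi)^{*}}$. Since these symbols are independent of $x$, the Bochner $L^{p_2}(\mathbb{T}^n_x,H)$-norm of $x\mapsto a(t)(\xi)e_\alpha$ reduces to $\|e^{-t\sigma(\xi)^{*}\sigma(\xi)}e_\alpha\|_{H}$, and the sufficient condition \eqref{Hiponsymbol} becomes the purely spectral-geometric series
\begin{equation*}
\sum_{\xi\in\mathbb{Z}^n}\sum_{\alpha\in\mathbb{Z}^\kappa}\|e^{-t\sigma(\xi)^{*}\sigma(\xi)}e_\alpha\|_{H}^{s}<\infty.
\end{equation*}

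Next I would invoke the ellipticity \eqref{lowerboundeigenvalue}: specialising the supremum to the prescribed value of $t>0$, there exists a uniform $M>0$ with
\begin{equation*}
\|e^{-t\sigma(\xi)^{*}\sigma(\xi)}e_\alpha\|_{H}\leq M\,e^{-tC(1+|\alpha|+|\xi|)^{m}},\qquad (\xi,\alpha)\in\mathbb{Z}^n\times\mathbb{Z}^\kappa,
\end{equation*}
and analogously for $\sigma(\xi)\sigma(\xi)^{*}$. Raising to the $s$-th power and using the polynomial-exponential estimate
\begin{equation*}
\sum_{(\xi,\alpha)\in\mathbb{Z}^{n+\kappa}}e^{-stC(1+|\alpha|+|\xi|)^{m}}\leq \kappa_{n,\kappa}\sum_{r=0}^{\infty}(1+r)^{n+\kappa-1}e^{-stCr^{m}}<\infty,
\end{equation*}
which is finite for every $m>0$ and $s,t,C>0$, establishes \eqref{Hiponsymbol} for the symbol of $e^{-t\Delta_a}$; Theorem \ref{TheoremNuclearSufficientCondition} then delivers the $s$-nuclearity on every pair $L^{p_1}(\mathbb{T}^n,H)\to L^{p_2}(\mathbb{T}^n,H)$. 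Exactly the same argument handles $e^{-t\Delta_b}$.

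For the trace identities \eqref{Formulaheatkerneltrace} I would invoke formula \eqref{nucleartracemultiplier} of the same theorem, which applies because $a(t)$ and $b(t)$ are Fourier multipliers: it yields
\begin{equation*}
\textnormal{n-Tr}(e^{-t\Delta_a})=\sum_{\xi\in\mathbb{Z}^n}\textnormal{n-Tr}\bigl(e^{-t\sigma(\xi)^{*}\sigma(\xi)}\bigr),\qquad \textnormal{n-Tr}(e^{-t\Delta_b})=\sum_{\xi\in\mathbb{Z}^n}\textnormal{n-Tr}\bigl(e^{-t\sigma(\xi)\sigma(\xi)^{*}}\bigr),
\end{equation*}
exactly as claimed. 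The main obstacle I anticipate is the bookkeeping that extracts a uniform constant $M$ from the pointwise supremum in \eqref{lowerboundeigenvalue}; this is essentially an interpretive step, since the ellipticity is meant to hold with $C$ independent of $(\xi,\alpha,t)$ and the contractivity of the self-adjoint heat semigroups keeps the resulting multiplier uniformly bounded by $1$, so that the $M$ above can in fact be taken equal to $1$. Once this is clarified, the remaining verification is just the geometric series estimate and a direct application of the previous theorem.
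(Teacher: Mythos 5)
Your proposal is correct and follows essentially the same route as the paper: identify the symbol of $e^{-t\Delta_a}$ via Lemma \ref{elpapadeloslemma}(6), reduce the hypothesis \eqref{Hiponsymbol} to the series $\sum_{\xi,\alpha}\|e^{-t\sigma(\xi)^*\sigma(\xi)}e_\alpha\|_H^s$, bound each term by $e^{-stC(1+|\alpha|+|\xi|)^m}$ using the $(m,B)$-ellipticity, sum over $\mathbb{Z}^{n+\kappa}$, and then invoke Theorem \ref{TheoremNuclearSufficientCondition} together with \eqref{nucleartracemultiplier} for the trace identity. Your closing remark about the uniformity (in $(\xi,\alpha)$) of the constant implicit in \eqref{lowerboundeigenvalue} is a genuine interpretive point that the paper's proof glosses over by writing $C'$ without justification; your reading is the intended one, so the argument goes through.
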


\begin{proof}
Let $t>0.$ The operator-valued symbol of $e^{-t\Delta_a}$ is given by  $\sigma_{e^{-t\Delta_a}}(\xi)=e^{-t \sigma(\xi)^* \sigma(\xi) }.$ If we prove that
\begin{equation*}
    \sum_{\xi\in\mathbb{Z}^n}\sum_{\alpha\in I}\Vert \sigma_{e^{-t\Delta_a}}(\xi)e_\alpha \Vert^s_{H}=\sum_{\xi\in\mathbb{Z}^n}\sum_{\alpha\in I}\Vert {e^{-t \sigma(\xi)^* \sigma(\xi)}}e_\alpha \Vert^s_{H}<\infty,
\end{equation*} then, from Theorem \ref{TheoremNuclearSufficientCondition}, we can deduce the $s$-nuclearity of $e^{-t\Delta_a}$ as well as the formula \eqref{Formulaheatkerneltrace}. From the $(m,B)$-ellipticity of $A$ we have
\begin{equation*}
    \Vert {e^{-t \sigma(\xi)^* \sigma(\xi)}}e_\alpha \Vert^s_{H}\leq C'e^{-Ct s(1+|\alpha|+|\xi|)^{m}}.
\end{equation*} We end the proof by summing over $(\xi,\alpha)\in \mathbb{Z}^{n+\kappa}$ and taking into account that $m>0.$ Indeed, we have
\begin{equation*}
    \sum_{\xi\in\mathbb{Z}^n}\sum_{\alpha\in I}\Vert \sigma_{e^{-t\Delta_a}}(\xi)e_\alpha \Vert^s_{H}\leq \sum_{\xi\in\mathbb{Z}^n}\sum_{\alpha\in I} Ce^{-t s(1+|\alpha|+|\xi|)^{m}} <\infty.
\end{equation*} We can use an analogous argument for $e^{-t\Delta_b}.$  This completes the proof. 
\end{proof}

The following lemma presents  a necessary condition for the Fredholmness of a Fourier multiplier.
\begin{lemma}\label{nece}
Let us consider a closed and densely defined vector-valued Fourier multiplier $A$ on $L^{2}(\mathbb{T}^n,H)$ with operator-valued symbol $\sigma.$ If $A$ is Fredholm, then for every $\eta\in\mathbb{Z}^n,$ $\sigma(\eta)$ extends to a Fredholm operator on $H.$
\end{lemma}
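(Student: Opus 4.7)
The plan is to exploit the fact that, as a Fourier multiplier, $A$ respects the orthogonal decomposition of $L^2(\mathbb{T}^n,H)$ into the frequency subspaces
\[
V_\eta:=\{e^{i2\pi x\cdot \eta}\otimes v:v\in H\}\cong H,\quad \eta\in\mathbb{Z}^n,
\]
and acts on each $V_\eta$ as the single operator $\sigma(\eta)$ on $H$. More precisely, for $v\in\textnormal{Dom}(\sigma(\eta))$ one has $e^{i2\pi x\cdot \eta}\otimes v\in\textnormal{Dom}(A)$ and $A(e^{i2\pi x\cdot \eta}\otimes v)=e^{i2\pi x\cdot \eta}\otimes \sigma(\eta)v\in V_\eta$, so the isometry $H\to V_\eta,\ v\mapsto e^{i2\pi x\cdot \eta}\otimes v$ intertwines $\sigma(\eta)$ with the restriction $A|_{V_\eta}$. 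Since $A$ is closed and densely defined, this forces $\sigma(\eta)$ (viewed on $V_\eta$) to be closed; we then show it is Fredholm as an operator on $H$.

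First I would verify that $\sigma(\eta)$ has finite-dimensional kernel. Any $v\in\ker(\sigma(\eta))$ gives $e^{i2\pi x\cdot \eta}\otimes v\in\ker(A)$, and the map $v\mapsto e^{i2\pi x\cdot \eta}\otimes v$ is an isometric embedding of $\ker(\sigma(\eta))$ into $\ker(A)$. Since $A$ is Fredholm, $\dim\ker(A)<\infty$, hence $\dim\ker(\sigma(\eta))<\infty$. Next, because $A$ is Fredholm, so is $A^*$; by Lemma \ref{elpapadeloslemma}(5) the adjoint $A^*$ is itself a Fourier multiplier with symbol $\sigma(\cdot)^*$, so the same argument applied to $A^*$ shows $\dim\ker(\sigma(\eta)^*)<\infty$.

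The key remaining step is to prove that $\textnormal{Range}(\sigma(\eta))$ is closed in $H$. Here I would use that $\textnormal{Range}(A)$ is closed (by Fredholmness) together with the orthogonal decomposition $L^2(\mathbb{T}^n,H)=\bigoplus_{\xi\in\mathbb{Z}^n}V_\xi$ which is preserved by $A$. If $(y_n)\subset\textnormal{Range}(\sigma(\eta))$ converges to $y$ in $H$, then $e^{i2\pi x\cdot \eta}\otimes y_n\to e^{i2\pi x\cdot \eta}\otimes y$ in $L^2(\mathbb{T}^n,H)$, and each term lies in $\textnormal{Range}(A)$. By closedness of $\textnormal{Range}(A)$, the limit is of the form $Af$ for some $f\in\textnormal{Dom}(A)$; expanding $f=\sum_\xi f_\xi$ along the decomposition and using $A=\bigoplus_\xi \sigma(\xi)$ on the frequency components, only the $\eta$-component survives, giving $y=\sigma(\eta)v$ for some $v\in\textnormal{Dom}(\sigma(\eta))$. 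Combining closed range with the finite-dimensionality of $\ker(\sigma(\eta))$ and $\ker(\sigma(\eta)^*)\cong\textnormal{Range}(\sigma(\eta))^{\perp}=\textnormal{coker}(\sigma(\eta))$ yields the Fredholmness of $\sigma(\eta)$.

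The main technical subtlety, and the only place I expect nontrivial care, is handling the unbounded/closed setting: one must justify that the frequency-wise decomposition $A=\bigoplus_\xi \sigma(\xi)$ genuinely extends to the closure of $A$, so that the restriction-to-$V_\eta$ and project-onto-$V_\eta$ operations commute with $A$ on its domain. Once this bookkeeping is settled, the three properties (finite kernel, finite cokernel, closed range) follow by the componentwise arguments above, which are essentially the standard transfer of Fredholmness from a direct sum of operators to each of its summands.
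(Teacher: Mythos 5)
Your kernel arguments coincide with the paper's: the paper also uses the intertwining relation $A(e^{i2\pi x\cdot\eta}v)=e^{i2\pi x\cdot\eta}\sigma(\eta)v$ (equation \eqref{symbol}) to embed a basis of $\ker\sigma(\eta)$ into $\ker A$, and invokes Lemma~\ref{elpapadeloslemma}(5) to pass to $A^*$ with symbol $\sigma(\cdot)^*$ so the same argument bounds $\dim\ker\sigma(\eta)^*$. Where you go beyond the paper is the closed-range step: the paper stops after noting that both kernels are finite-dimensional and treats that as sufficient, but for a closed, densely defined operator this is not sufficient on its own --- a compact injective operator with dense range has $\ker T=\ker T^*=\{0\}$ yet is not Fredholm, precisely because its range is not closed. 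Your argument of lifting a convergent sequence $y_n=\sigma(\eta)v_n$ to $e^{i2\pi x\cdot\eta}\otimes y_n\in\mathrm{Range}(A)$, using closedness of $\mathrm{Range}(A)$, and then projecting the preimage onto the $\eta$-frequency component $V_\eta$ supplies exactly the missing piece, so your proof is actually the more complete one. The one point you correctly flag as needing care --- that the frequency projections $P_\eta$ preserve $\mathrm{Dom}(A)$ and commute with $A$ on it, so that only the $\eta$-component of the preimage survives --- is the sort of domain bookkeeping that the Fourier-multiplier structure does support, but it deserves to be said explicitly rather than left implicit as in the paper.
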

\begin{proof}
We  need to show that if $A$ is Fredholm, then for all $\eta\in\mathbb{Z}^n,$ $\sigma(\eta)$ has kernel and cokernel of finite dimension. Since the symbol  of the adjoint $A^{*}$ of $A$ is $\sigma^*\{\sigma(\eta)^*\}_{\eta\in\mathbb{Z}^n},$
and $A$ is Fredholm if only if $A^*$ is Fredholm, it is sufficient to show that $$\dim \textnormal{Ker}(\sigma(\eta))<\infty,\,\,\,\forall \eta\in\mathbb{Z}^n.$$
By \eqref{symbol}, we deduce that for all $v\in H,$ $\eta\in\mathbb{Z}^n,$
\begin{equation}
A(e^{i2\pi x\cdot\eta}v)=e^{i2\pi x\cdot\eta}\sigma(\eta)v,\,\,\,v\in H.
\end{equation}
Hence if $\{v_{i,\eta}\}$ is a basis for $\textnormal{Ker}(\sigma(\eta)),$ then $\{\phi_{i,\eta}:=x\mapsto v_{i,\eta}e^{ix\cdot \eta}\}$ is a set of linearly independent functions in $\textnormal{Ker}(A),$ which has finite dimension. So, $\textnormal{Ker}(\sigma(\eta))$ is finite-dimensional. Thus, we finish the proof.
\end{proof}
The following tool is known as the McKean-Singer theorem. We present a proof for completeness.
\begin{lemma}\label{heatapproach}
Let us assume that $T:H_{1}\rightarrow H_{2}$ is a Fredholm operator, $TT^*$ and $T^*T$ have discrete spectrum, and for all $t>0,$\,  $ e^{-tT^*T}$ and $e^{-tTT^*}$ are trace class. Then \begin{equation}
\textnormal{\textbf{Ind}}(T)=\textnormal{\textbf{Tr}}(e^{-tT^*T})-\textnormal{\textbf{Tr}}(e^{-tTT^*}).
\end{equation}  
\end{lemma}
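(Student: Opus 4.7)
The plan is to diagonalize $T^{*}T$ and $TT^{*}$ and exploit the standard observation that these two self-adjoint operators share identical non-zero spectra with identical multiplicities, so that in the difference of traces only the contributions coming from the kernel survive.

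First I would invoke the hypothesis that both $T^{*}T$ and $TT^{*}$ have discrete spectrum. Since they are positive self-adjoint, this gives orthonormal bases $\{v_{j}\}$ of $H_{1}$ and $\{w_{k}\}$ of $H_{2}$ consisting of eigenvectors, with eigenvalues $\mu_{j}\geq 0$ and $\nu_{k}\geq 0$ respectively. The trace-class hypothesis for $e^{-tT^{*}T}$ and $e^{-tTT^{*}}$, together with the spectral theorem, then yields absolutely convergent expressions
\begin{equation}
\textnormal{Tr}(e^{-tT^{*}T})=\sum_{j}e^{-t\mu_{j}},\qquad \textnormal{Tr}(e^{-tTT^{*}})=\sum_{k}e^{-t\nu_{k}},
\end{equation}
so the difference is legitimate to manipulate term by term.

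The main step, and the core of the argument, is the \emph{intertwining} between the non-zero parts of the spectra. If $\mu>0$ is an eigenvalue of $T^{*}T$ with eigenvector $v$, then $Tv\neq 0$ (because $\|Tv\|^{2}=\langle T^{*}Tv,v\rangle=\mu\|v\|^{2}>0$), and a direct computation $TT^{*}(Tv)=T(T^{*}Tv)=\mu Tv$ shows that $Tv$ is an eigenvector of $TT^{*}$ with the same eigenvalue $\mu$. Applying the same argument to $T^{*}$ and using the normalization $v\mapsto \mu^{-1/2}Tv$, I would check that $v\mapsto \mu^{-1/2}Tv$ is an isometric bijection from the $\mu$-eigenspace of $T^{*}T$ onto the $\mu$-eigenspace of $TT^{*}$. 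Consequently the non-zero eigenvalues of $T^{*}T$ and $TT^{*}$ coincide, with the same finite multiplicities, and their contributions to $\textnormal{Tr}(e^{-tT^{*}T})$ and $\textnormal{Tr}(e^{-tTT^{*}})$ cancel exactly. What remains is
\begin{equation}
\textnormal{Tr}(e^{-tT^{*}T})-\textnormal{Tr}(e^{-tTT^{*}})=\dim\ker(T^{*}T)-\dim\ker(TT^{*}).
\end{equation}

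To conclude, I would identify $\ker(T^{*}T)=\ker T$ (from $\langle T^{*}Tv,v\rangle=\|Tv\|^{2}$) and $\ker(TT^{*})=\ker T^{*}$ by the same token. Since $T$ is Fredholm these kernels are finite-dimensional, and $\dim\ker T^{*}=\dim \textnormal{coker}(T)$, giving the right-hand side the value $\textnormal{Ind}(T)$. The resulting identity is manifestly independent of $t>0$, consistent with the fact that the spectral cancellation above holds for every $t$. The only delicate point is justifying the rearrangement that separates the kernel contributions from the positive-eigenvalue contributions, but this is immediate from the absolute convergence provided by the trace-class hypothesis, so no additional analytic difficulty arises.
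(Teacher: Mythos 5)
Your proof is correct and follows essentially the same route as the paper's: diagonalize via the discrete spectrum hypothesis, intertwine the nonzero eigenvalues of $T^{*}T$ and $TT^{*}$ using $T$, cancel those contributions, and identify the leftover kernel dimensions with the index. You are somewhat more careful than the paper in verifying that $v\mapsto\mu^{-1/2}Tv$ is an isometric bijection between eigenspaces (so that multiplicities match) and in noting the absolute convergence justifying the rearrangement, but these are refinements of the same argument, not a different one.
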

\begin{proof}
Let $\lambda \in \textnormal{Spect}(T^*T),$ $\lambda\neq 0,$ then there exists a non-zero vector $\phi\in H_{1}$ such that $T^*T(\phi)= \lambda \phi$ and therefore we have $TT^*T(\phi)= \lambda T\phi.$ Since $T(\phi)$ is non-zero, it is an eigenvalue of $TT^*.$ It follows that the non-zero eigenvalues of $T^*T$ and $TT^*$ are the same. Thus
$$ \textnormal{\textbf{Tr}}(e^{-tT^*T})-\textnormal{\textbf{Tr}}(e^{-tTT^*})=\dim \text{Ker} (T^{*}T)-\dim \text{Ker} (TT^{*}).$$
Since $\text{Ker}(T^*T)=\text{Ker}(T)$ and $\text{Ker}(TT^*)=\text{Ker}(T^*)$ the proof follows.
\end{proof}

Now,  with the machinery developed above, we present the main theorem  of this subsection.

\begin{theorem}\label{Indextheorem1} 
 Let us assume that the Fourier multiplier $A\equiv T_\sigma:L^2(\mathbb{T}^n,H)\rightarrow L^2(\mathbb{T}^n,H)$ is a (closed and densely defined) Fredholm operator. If $A$ is $(m,B)$-elliptic then the analytical index of $A$  is given by 
\begin{equation}\label{IndexFormulaNov2018}
    \textnormal{\textbf{Ind}}(A)=\sum_{\eta\in \mathbb{Z}^n}\textnormal{\textbf{Ind}}(\sigma(\eta)).
\end{equation}
\end{theorem}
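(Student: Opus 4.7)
The plan is to reduce the index of $A$ to the indices of the symbols $\sigma(\eta)$ via the McKean--Singer heat kernel formula (Lemma \ref{heatapproach}), applied both globally to $A$ on $L^{2}(\mathbb{T}^{n},H)$ and fiberwise to each operator $\sigma(\eta)$ on $H$, and then to use the trace identity \eqref{Formulaheatkerneltrace} to match the two computations.

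First I would verify the hypotheses of Lemma \ref{heatapproach} at the global level. The operator $A$ is Fredholm by assumption, while the preceding lemma (with $s=1$) shows that the heat semigroups $e^{-t\Delta_{a}}$ and $e^{-t\Delta_{b}}$ associated to $\Delta_{a}=A^{*}A$ and $\Delta_{b}=AA^{*}$ are nuclear on $L^{2}(\mathbb{T}^{n},H)$. Since $L^{2}(\mathbb{T}^{n},H)$ is a separable Hilbert space, nuclearity coincides with the trace-class property in the Hilbert-space sense, and in particular the selfadjoint nonnegative operators $\Delta_{a}$ and $\Delta_{b}$ have discrete spectrum. McKean--Singer therefore yields
\begin{equation*}
\textnormal{Ind}(A)=\textnormal{Tr}(e^{-t\Delta_{a}})-\textnormal{Tr}(e^{-t\Delta_{b}}),\qquad t>0.
\end{equation*}

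Next I would verify McKean--Singer fiberwise. By Lemma \ref{nece}, each $\sigma(\eta)$ extends to a Fredholm operator on $H$. The $(m,B)$-ellipticity of $A$ gives the pointwise bound $\|e^{-t\sigma(\eta)^{*}\sigma(\eta)}e_{\alpha}\|_{H}\leq C\,e^{-tC(1+|\alpha|+|\eta|)^{m}}$, whose summability over $\alpha\in I=\mathbb{Z}^{\kappa}$ shows that $e^{-t\sigma(\eta)^{*}\sigma(\eta)}$ is nuclear (hence trace class) on $H$, with a completely analogous statement for $e^{-t\sigma(\eta)\sigma(\eta)^{*}}$. A second application of Lemma \ref{heatapproach}, now to $T=\sigma(\eta)$, gives
\begin{equation*}
\textnormal{Ind}(\sigma(\eta))=\textnormal{Tr}(e^{-t\sigma(\eta)^{*}\sigma(\eta)})-\textnormal{Tr}(e^{-t\sigma(\eta)\sigma(\eta)^{*}}).
\end{equation*}

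Finally I would combine the two computations using \eqref{Formulaheatkerneltrace}. Since the nuclear trace on a Hilbert space equals the operator trace, the lemma yields
\begin{equation*}
\textnormal{Tr}(e^{-t\Delta_{a}})=\sum_{\eta\in\mathbb{Z}^{n}}\textnormal{Tr}(e^{-t\sigma(\eta)^{*}\sigma(\eta)}),\qquad \textnormal{Tr}(e^{-t\Delta_{b}})=\sum_{\eta\in\mathbb{Z}^{n}}\textnormal{Tr}(e^{-t\sigma(\eta)\sigma(\eta)^{*}}),
\end{equation*}
and both sums are absolutely convergent thanks to the exponential decay supplied by $(m,B)$-ellipticity, which legitimates termwise subtraction. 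Substituting the fiberwise McKean--Singer identity into the resulting difference gives
\begin{equation*}
\textnormal{Ind}(A)=\sum_{\eta\in\mathbb{Z}^{n}}\bigl[\textnormal{Tr}(e^{-t\sigma(\eta)^{*}\sigma(\eta)})-\textnormal{Tr}(e^{-t\sigma(\eta)\sigma(\eta)^{*}})\bigr]=\sum_{\eta\in\mathbb{Z}^{n}}\textnormal{Ind}(\sigma(\eta)),
\end{equation*}
which is the desired formula. The main obstacle is conceptual rather than technical: one has to make sure that the trace identity \eqref{Formulaheatkerneltrace}, proved via Banach-space nuclearity on $L^{2}(\mathbb{T}^{n},H)$, is compatible with the Hilbert-space McKean--Singer formula. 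This is handled by noting that on a separable Hilbert space the nuclear (Grothendieck) trace coincides with the standard spectral trace, so the two calculations take place in the same trace ideal and can be combined without ambiguity.
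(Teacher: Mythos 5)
Your proof is correct and follows essentially the same route as the paper: apply McKean--Singer (Lemma~\ref{heatapproach}) globally to $A$ and fiberwise to each $\sigma(\eta)$, verify the hypotheses via Lemma~\ref{nece} and the heat-kernel nuclearity lemma, and then splice the two computations together using \eqref{Formulaheatkerneltrace}. The only cosmetic difference is how the nuclear trace and spectral trace are reconciled: you appeal to the Hilbert-space Lidskii theorem directly, whereas the paper invokes the Grothendieck result for $s$-nuclear operators with $s=\tfrac{2}{3}$; both are valid here since the heat semigroups are $s$-nuclear for every $0<s\leq 1$.
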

\begin{proof}
    By Lemma \ref{elpapadeloslemma}, we have
\begin{equation}\label{discrete}
\textnormal{Spect}(A)=\bigcup_{\eta\in\mathbb{Z}^n}\textnormal{Spect}(\sigma(\eta)).
\end{equation} So, by using the fact that $A$ is $(m,B)$-elliptic we deduce that $\textnormal{Spect}(\sigma(\eta))$ is a discrete set for all $\eta$ and we conclude  that   $\textnormal{Spect}(A)$ is also a discrete set. 
By applying Lemma \ref{heatapproach} and Lemma \ref{nece} we deduce that every $\sigma(\eta)$ is Fredholm for all $\eta,$ and consequently,
\begin{equation}
\textnormal{\textbf{Ind}}[\sigma(\eta)]=\textnormal{\textbf{sp-Tr}}[e^{-t\sigma(\eta)^*\sigma(\eta)}]-\textnormal{\textbf{sp-Tr}}[e^{-t\sigma(\eta)\sigma(\eta)^*}].
\end{equation}
 By using, Lemma \ref{heatapproach},  Lemma \ref{nece}, the Grothendieck-Lidskii theorem for $s=\frac{2}{3},$ and  \eqref{Formulaheatkerneltrace}, we obtain
\begin{align*}
\textnormal{\textbf{Ind}}[A]&=\textnormal{\textbf{sp-Tr}}[e^{-tA^*A}]-\textnormal{\textbf{sp-Tr}}[e^{-tAA^*}]=\textnormal{\textbf{n-Tr}}[e^{-tA^*A}]-\textnormal{\textbf{n-Tr}}[e^{-tAA^*}]\,\,\,\,\,\,\,\\
&=\sum_{\eta\in\mathbb{Z}^n}(\textnormal{\textbf{n-Tr}}[e^{-t\sigma(\eta)^*\sigma(\eta)}]-\textnormal{\textbf{n-Tr}}[e^{-t\sigma(\eta)\sigma(\eta)*}])\\
&=\sum_{\eta\in\mathbb{Z}^n}(\textnormal{\textbf{sp-Tr}}[e^{-t\sigma(\eta)^*\sigma(\eta)}]-\textnormal{\textbf{sp-Tr}}[e^{-t\sigma(\eta)\sigma(\eta)^*}])\\
&=\sum_{\eta\in\mathbb{Z}^n}\textnormal{\textbf{Ind}}[\sigma(\eta)],
\end{align*}
which concludes the proof of the theorem. 
\end{proof}

\subsection{An index formula for elliptic  operators on periodic H\"ormander classes}
 In this section we will use our index formula \eqref{IndexFormulaNov2018} in order to study the index of periodic elliptic pseudo-differential operators $T$ defined on $L^2(\mathbb{T}^n\times\mathbb{T}^m)$ where $m$ and $n$ are integer numbers. By following Ruzhansky and Turunen \cite{RT1}, if $T$ is a continuous operator on $C^\infty( \mathbb{T}^\kappa),$  there exists a function $\sigma_T:\mathbb{T}^\kappa\times \mathbb{Z}^\kappa\rightarrow\mathbb{C} ,$ satisfying
 \begin{eqnarray}
 Tf(x)=\sum_{\xi\in\mathbb{Z}^\kappa}e^{i2\pi x\cdot \xi}\sigma_T(x,\xi)(\mathscr{F}_{\mathbb{T}^\kappa}{f})(\xi),
 \end{eqnarray}
for every $f\in C^\infty(\mathbb{T}^\kappa).$ The function $\sigma_T$ is the so called, full symbol, associated to $T.$ An important property in this pseudo-differential calculus is the relation
\begin{eqnarray}
\sigma_T(x,\xi)=e_{-\xi}(x)T(e_\xi)(x),\,\,e_\xi(x):=e^{i2\pi x\cdot \xi},\xi\in\mathbb{Z}^\kappa,
\end{eqnarray} which shows the uniqueness of the symbol. In particular, if $\kappa=n+m$ and $T$ is a continuous operator on $C^\infty(\mathbb{T}^n\times \mathbb{T}^m)$ we will write
\begin{eqnarray}
Tf(x,y)=\sum_{\xi\in\mathbb{Z}^n}\sum_{\eta\in\mathbb{Z}^m}e^{i2\pi(x,y)\cdot (\xi,\eta)}a(x,y,\xi,\eta)(\mathscr{F}_{\mathbb{T}^n\times \mathbb{T}^m }{f})(\xi,\eta)
\end{eqnarray} for every $f\in C^\infty( \mathbb{T}^n\times \mathbb{T}^m).$ The symbol $\sigma_T((x,y),(\xi,\eta))=a(x,y,\xi,\eta)$ is a function defined on $\mathbb{T}^n\times \mathbb{T}^m\times \mathbb{Z}^n\times \mathbb{Z}^m.$ An important question arose in the previous section is the existence of Fredholm vector-valued Fourier multipliers  on $L^2(\mathbb{T}^n,H)$ satisfying the condition of $(m,B)$-ellipticity. In order to answer this question, we will associate every elliptic operator $T$ on $C^\infty(\mathbb{T}^n\times \mathbb{T}^m)$  to a vector-valued Fourier multiplier $A_T$ on $C^\infty(\mathbb{T}^n,L^2(\mathbb{T}^n))$ satisfying
\begin{align}
    \textnormal{\textbf{Ind}}(T)=\textnormal{\textbf{Ind}}(A_T).
\end{align} In order to present the construction of the vector-valued Fourier multiplier $A_T,$ we need the following remark.

\begin{remark}[Construction of $A_T$] Let $T$ be a continuous elliptic  operator on $C^\infty(\mathbb{T}^n\times \mathbb{T}^m).$ If $a(x,y,\xi,\eta)$ denotes the symbol of $T$ then for every $(x,\xi)\in\mathbb{T}^n\times \mathbb{Z}^n,$ consider the continuous operator $\sigma{(x,\xi)}:C^\infty(\mathbb{T}^m) \rightarrow C^\infty(\mathbb{T}^m),$ defined by
\begin{eqnarray}
\sigma{(x,\xi)}g(y)=\sum_{\eta\in\mathbb{Z}^n}e^{i 2\pi y\cdot \eta}a(x,y,\xi,\eta)(\mathscr{F}_{\mathbb{T}^m}g)(\eta),\,\,\,g\in C^\infty(\mathbb{T}^m).
\end{eqnarray}
 Clearly, $\sigma{(x,\xi)}$ is a periodic pseudo-differential operator with symbol $a(x,\cdot, \xi, \cdot).$ Now, let us define $A_T$ as the vector-valued pseudo-differential operator associated to the symbol  $$\sigma:=\{\sigma(x,\xi)\}_{(x,\xi)\in \mathbb{T}^n\times \mathbb{Z}^n}.$$ In terms of the orthonormal basis $B=\{e_\eta:\eta\in\mathbb{Z}^n\},$ where $e_{\eta}(y):=e^{i2\pi y\cdot \eta},$ of $L^2(\mathbb{T}^m)$
  the operator $A_T:L^2_{fin\, B}(\mathbb{T}^n,L^2(\mathbb{T}^m))\rightarrow L^2(\mathbb{T}^n,L^2(\mathbb{T}^m)) $ is defined by the formula
 \begin{eqnarray}
 (A_T u)(x)=\sum_{\xi\in \mathbb{Z}^n}e^{i2\pi x\cdot \xi}\sigma(x,\xi)\widehat{u}(\xi),
 \end{eqnarray}where $u\in L^2_{fin\, B}(\mathbb{T}^n,L^2(\mathbb{T}^m))$ and $\widehat{u}(\xi)\in L^2(\mathbb{T}^m)$ is the vector-valued Fourier transform of $u$ at $\xi\in\mathbb{Z}^n.$
\end{remark}

Now,  we restrict our attention to periodic pseudo-differential operator of the form
\begin{equation}\label{spplit}
    Tf(x,y)=\sum_{(\xi,\eta)\in \mathbb{Z}^n\times \mathbb{Z}^m}e^{i2\pi(x,y)\cdot (\xi,\eta)}a(y,\xi,\eta)\widehat{f}(\xi,\eta),\,\,x\in \mathbb{T}^n,\,y\in\mathbb{T}^m.
\end{equation} With this restriction the vector-valued symbol associated to $A_T$ is independent of the variables $x,$ and consequently $A_T$ is a vector-valued Fourier multiplier. In this framework, we can apply the vector-valued index theorem of the previous section.

We assume that $T$ is an elliptic operator on $\mathbb{T}^{n}\times \mathbb{T}^{m}$ and that $T$ belongs to the H\"ormander class $\Psi^\varkappa_{\rho,\delta}( \mathbb{T}^{m}\times \mathbb{Z}^{n}\times \mathbb{Z}^{m}),$ $0\leq \delta<\rho\leq 1,$ for some positive order $\varkappa>0.$ This means that we have estimates of the form
\begin{equation}
    |\partial_y^{\gamma}\Delta_{\xi}^{\alpha}\Delta_\eta^{\omega}\sigma(y,\xi,\eta)|\leq C_{\beta,\gamma,\alpha,\omega}\langle \xi,\eta\rangle^{\varkappa-\rho(|\alpha|+|\omega|)+\delta|\gamma| },\,\,\beta,\gamma\in \mathbb{N}_0^n,\,\,\alpha,\omega\in \mathbb{N}_0^m,
\end{equation}
and 
\begin{equation}
    |\sigma(y,\xi,\eta)|\geq C_\sigma\langle \xi,\eta\rangle^{\varkappa},\,\,|\xi|+|\eta|\geq M, \textnormal{ for some }M>0,
\end{equation} where $\langle \xi,\eta\rangle:=(1+|\eta|^2+|\xi|^2)^{\frac{1}{2}}.$ These are sufficient and necessary conditions for the Fredholmness of the operator $A$ on $L^2(\mathbb{T}^{n}\times \mathbb{T}^{m}),$ (see  Ruzhansky and Turunen \cite{RT1,Ruz}). 

Let us note that we can identify $C^\infty(\mathbb{T}^{n}\times \mathbb{T}^{m})\cong C^\infty(\mathbb{T}^{n}, C^\infty(\mathbb{T}^{m})) $ by using the isomorphism $$ f\rightsquigarrow \alpha_f:C^\infty(\mathbb{T}^{n}\times \mathbb{T}^{m})\rightarrow  C^\infty(\mathbb{T}^{n}, C^\infty(\mathbb{T}^{m})),\,\,\,\alpha_f(x)(y)=f(x,y),    $$ and moreover, we also have the identification $L^2(\mathbb{T}^{n}\times \mathbb{T}^{m})\cong L^2(\mathbb{T}^{n}, L^2(\mathbb{T}^{m})).$ In this sense, the periodic pseudo-differential operator $T$, can be realised as a vector-valued Fourier multiplier $A_T.$ In fact,

\begin{equation}\label{spplit22}
Af(x,\cdot)\equiv    A_T(\alpha_f)(x)=\sum_{\xi\in \mathbb{Z}^n}e^{i2\pi x\cdot \xi}a(\xi)\widehat{\alpha}_f(\xi)\in L^2(\mathbb{T}^m),\,\,x\in \mathbb{T}^n,\,\,\,f\in C^\infty(\mathbb{T}^m),
\end{equation} where the  operator-valued symbol $a:\mathbb{Z}^n\rightarrow \mathcal{L}(H^s(\mathbb{T}^n),  H^{s-\varkappa}(\mathbb{T}^n) ),$ is the periodic pseudo-differential operator defined by \begin{equation}\label{eureka}
    a(\xi)g(y)\equiv \textnormal{Op}(\sigma(\cdot,\xi,\cdot))g(y):=\sum_{\eta\in \mathbb{Z}^m }e^{i 2\pi x\cdot \xi}\sigma(y,\xi,\eta)\widehat{g}(\eta),\,\,g\in C^{\infty}(\mathbb{T}^m).
\end{equation}
With the notation above, we present the following index formula. 
By properties of elliptic pseudo-differential operators, the kernel and the cokernel of elliptic operators consist of smooth functions. With this property in mind, we see that $\dim \textnormal{Ker}(T)=\dim \textnormal{Ker}(A_T)$ and $\dim \textnormal{Coker}(T)=\dim \textnormal{Coker}(A_T).$ Therefore, we deduce that $\textnormal{\textbf{Ind}}(T)= \textnormal{\textbf{Ind}}(A_T).$

\begin{theorem} \label{periodicindex2018} Let us assume that $T$ as in \eqref{spplit} is an elliptic operator on $\mathbb{T}^{n}\times \mathbb{T}^{m}$ and that $T$ belongs to the H\"ormander class $\Psi^\varkappa_{\rho,\delta}( \mathbb{T}^{m}\times \mathbb{Z}^{n}\times \mathbb{Z}^{m}),$ $0\leq \delta<\rho\leq 1,$ for some $\varkappa>0.$ Assume that $A_T$ is  $(\varkappa,B)$-elliptic, where  $B=\{e_{\eta}:\eta\in\mathbb{Z}^m \},$ $e_\eta(y)=e^{i2\pi \eta y},$ $y\in \mathbb{T}^m.$  Then, the index of $T$ is given by
\begin{equation}\label{indexrealised}
    \textnormal{\textbf{Ind}}(T)=\sum_{\xi\in\mathbb{Z}^n}\textnormal{\textbf{Ind}}(a(\xi)),
\end{equation} where the operator-valued symbol $a=(a(\xi))_{\xi\in\mathbb{Z}^n}$ is defined as in \eqref{eureka}. \end{theorem}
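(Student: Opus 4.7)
The plan is to reduce the index of $T$ to the previously-established vector-valued index formula of Theorem~\ref{Indextheorem1} via the identification $L^2(\mathbb{T}^n\times\mathbb{T}^m)\cong L^2(\mathbb{T}^n,L^2(\mathbb{T}^m))$. As the excerpt already notes, the map $f\mapsto \alpha_f$ is an isometric isomorphism, and for symbols that do not depend on the first spatial variable (as in \eqref{spplit}), a straightforward manipulation of the double Fourier series shows that $T$ and $A_T$ correspond under this isomorphism at the level of $C^\infty$ and extend to the same bounded operator at the $L^2$-level. So the first step I would carry out is to verify this commuting-diagram statement rigorously: compute $\widehat{\alpha_f}(\xi)\in L^2(\mathbb{T}^m)$ from the full Fourier expansion of $f$, and check that $A_T(\alpha_f)(x)(y)$ coincides with $(Tf)(x,y)$ pointwise in $\xi,\eta$.

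Next I would transfer Fredholmness. By the Ruzhansky--Turunen theory, the assumed ellipticity in $\Psi^{\varkappa}_{\rho,\delta}(\mathbb{T}^m\times\mathbb{Z}^n\times\mathbb{Z}^m)$ with $0\le\delta<\rho\le 1$ and $\varkappa>0$ guarantees that $T$ is Fredholm on $L^2(\mathbb{T}^n\times\mathbb{T}^m)$ and, by elliptic regularity, $\textnormal{Ker}(T)\subset C^\infty$ and $\textnormal{Coker}(T)$ is represented by smooth functions. Using the identification above, these finite-dimensional spaces are carried bijectively onto $\textnormal{Ker}(A_T)$ and $\textnormal{Coker}(A_T)$, so $A_T$ is a Fredholm operator on $L^2(\mathbb{T}^n,L^2(\mathbb{T}^m))$ with
\begin{equation*}
\textnormal{Ind}(T)=\dim\textnormal{Ker}(T)-\dim\textnormal{Coker}(T)=\dim\textnormal{Ker}(A_T)-\dim\textnormal{Coker}(A_T)=\textnormal{Ind}(A_T).
\end{equation*}

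At this point the hypotheses of Theorem~\ref{Indextheorem1} are all in place: $A_T$ is a closed, densely defined, Fredholm vector-valued Fourier multiplier on $L^2(\mathbb{T}^n,L^2(\mathbb{T}^m))$ whose operator-valued symbol is exactly $\xi\mapsto a(\xi)$ defined by \eqref{eureka}, and it is $(\varkappa,B)$-elliptic by assumption, where $B=\{e_\eta\}_{\eta\in\mathbb{Z}^m}$ is the Fourier basis of $L^2(\mathbb{T}^m)$. Applying that theorem directly yields
\begin{equation*}
\textnormal{Ind}(A_T)=\sum_{\xi\in\mathbb{Z}^n}\textnormal{Ind}(a(\xi)),
\end{equation*}
which, combined with $\textnormal{Ind}(T)=\textnormal{Ind}(A_T)$, gives \eqref{indexrealised}.

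The main obstacle, in my view, is not the application of Theorem~\ref{Indextheorem1} itself but the careful checking that the $(\varkappa,B)$-ellipticity assumption at the vector-valued level is genuinely compatible with the scalar ellipticity of $T$; in particular, one must make sure that the heat-kernel decay condition \eqref{lowerboundeigenvalue} for the sequence of operators $a(\xi)^*a(\xi)$, $a(\xi)a(\xi)^*$ transferred to the basis $\{e_\eta\}$ really does come from the joint ellipticity estimate $|\sigma(y,\xi,\eta)|\gtrsim\langle\xi,\eta\rangle^{\varkappa}$. This spectral lower bound on each slice $a(\xi)$ is what feeds the $s$-nuclearity of $e^{-tA_T^*A_T}$ and $e^{-tA_TA_T^*}$ and justifies swapping the McKean--Singer trace with the sum over $\xi$; handling it rigorously is the only non-cosmetic step, and everything else is bookkeeping via the isomorphism $\alpha_f$.
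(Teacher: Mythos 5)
Your proof is correct and follows essentially the same route as the paper: transfer the problem to $A_T$ via the isomorphism $f\mapsto\alpha_f$, use elliptic regularity to equate $\textnormal{Ind}(T)$ with $\textnormal{Ind}(A_T)$, and then invoke Theorem~\ref{Indextheorem1} using the assumed $(\varkappa,B)$-ellipticity. The concern raised in your final paragraph is unnecessary for this theorem: the $(\varkappa,B)$-ellipticity of $A_T$ is an explicit \emph{hypothesis} here, not something to be derived from the scalar ellipticity of $T$ (the paper only illustrates in the subsequent Example how it can arise from the lower bound $|\sigma(y,\xi,\eta)|\gtrsim\langle\xi,\eta\rangle^{\varkappa}$ in the Fourier-multiplier case).
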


\begin{proof}
   The Fredholmness of $T$ is a consequence of the ellipticity condition on its symbol $a(\cdot,\cdot,\cdot).$ Since we know that  $\dim \textnormal{Ker}(T)=\dim \textnormal{Ker}(A_T)$ and $\dim \textnormal{Coker}(T)=\dim\textnormal{Coker}(A_T),$ we obtain the Fredholmness of $A_T$ on $L^2(\mathbb{T}^n,L^2(\mathbb{T}^m)).$ So, we deduce that $\textnormal{\textbf{Ind}}(T)= \textnormal{\textbf{Ind}}(A_T).$ In order to  use Theorem \ref{Indextheorem1} to deduce \eqref{indexrealised}, we only need to use that $A_T$ is $(\varkappa,B)$-elliptic, where $B,$ in this case, is the orthonormal basis of $L^2(\mathbb{T}^m)$ given by  $B=\{e_{\eta}:\eta\in\mathbb{Z}^m \},$ $e_\eta(y)=e^{i2\pi \eta y},$ $y\in \mathbb{T}^m.$ So, we finish the proof.
\end{proof}
\begin{example} In the previous theorem we have assumed that the operator $A_T$ is $(\varkappa,B)$-elliptic. The main goal of this example is to show that this condition arises naturally, in the context of periodic operators,  for Fourier multipliers on $C^\infty(\mathbb{T}^n\times \mathbb{T}^m ).$
Let us consider the Fourier multiplier
\begin{equation}
    Tf(x,y)=\sum_{\xi\in\mathbb{Z}^n}\sum_{\eta\in\mathbb{Z}^m}e^{i2\pi(x,y)\cdot (\xi,\eta)}a(\xi,\eta)(\mathscr{F}_{\mathbb{T}^n\times \mathbb{T}^m }{f})(\xi,\eta),\,\,x\in\mathbb{T}^n,\,y\in\mathbb{T}^m,
\end{equation} where the symbol $a$ is elliptic and of order $\varkappa>0.$ This means that
\begin{equation}
    |\Delta_{\xi}^{\alpha}\Delta_\eta^{\omega}\sigma(\xi,\eta)|\leq C_{\alpha,\omega}\langle \xi,\eta\rangle^{\varkappa-\rho(|\alpha|+|\omega|) },\,\,\,\,\alpha,\omega\in \mathbb{N}_0^m,
\end{equation}
and 
\begin{equation}\label{lowebounellip}
    |\sigma(\xi,\eta)|\geq C_\sigma\langle \xi,\eta\rangle^{\varkappa},\,\,|\xi|+|\eta|\geq M, \textnormal{ for some }M>0.
\end{equation}
We want to illustrate that $T$ satisfies the hypothesis of Theorem \ref{periodicindex2018}. So, we only need to show that $A_T$ is $(\varkappa,B)$-elliptic, where  $B=\{e_{\eta}:\eta\in\mathbb{Z}^m \},$ $e_\eta(y)=e^{i2\pi \eta y},$ $y\in \mathbb{T}^m.$ So, we will prove that
$$\sup_{0<t<\infty} e^{tC (1+|\eta|+|\xi|)^{m}}\Vert {e^{-t a(\xi)^* a(\xi)}}e_\eta \Vert_{L^2(\mathbb{T}^m)}$$ and $$ \sup_{0<t<\infty} e^{tC (1+|\eta|+|\xi|)^{m}}\Vert {e^{-t a(\xi) a(\xi)^*}}e_\eta \Vert_{L^2(\mathbb{T}^m)}  <\infty.$$
First, we observe that
\begin{align*}
   \Vert {e^{-t a(\xi)^* a(\xi)}}e_\eta \Vert_{L^2(\mathbb{T}^m)}= \Vert e_{-\eta}{e^{-t a(\xi)^* a(\xi)}}e_\eta \Vert_{L^2(\mathbb{T}^m)}= \Vert \sigma_{t}(\xi,\eta) \Vert_{L^2(\mathbb{T}^m)}=|\sigma_{t}(\xi,\eta)|,
\end{align*}where $\sigma_{t}(\xi,\cdot)$ is the symbol of the operator $e^{-t a(\xi)^* a(\xi)}.$ The symbolic calculus in this case allows us to conclude that $\sigma_{t}(\xi,\eta)=e^{-t \sigma(\xi,\eta)^* \sigma(\xi,\eta)}=e^{-t |a(\xi,\eta)|^2}.$ The estimate \eqref{lowebounellip} implies that
\begin{eqnarray}
 \Vert {e^{-t a(\xi)^* a(\xi)}}e_\eta \Vert_{L^2(\mathbb{T}^m)}\leq e^{-tC_\sigma\langle \xi,\eta\rangle^{\varkappa}}.
\end{eqnarray} Consequently,
\begin{eqnarray}
 \sup_{0<t<\infty}e^{tC_\sigma\langle \xi,\eta\rangle^{\varkappa}}\Vert {e^{-t a(\xi)^* a(\xi)}}e_\eta \Vert_{L^2(\mathbb{T}^m)}<\infty.
\end{eqnarray} A similar analysis shows that 
\begin{equation}
 \sup_{0<t<\infty}e^{tC_\sigma\langle \xi,\eta\rangle^{\varkappa}}\Vert {e^{-t a(\xi) a(\xi)^*}}e_\eta \Vert_{L^2(\mathbb{T}^m)}=\sup_{0<t<\infty}e^{tC_\sigma\langle \xi,\eta\rangle^{\varkappa}}\Vert {e^{-t a(\xi)^* a(\xi)}}e_\eta \Vert_{L^2(\mathbb{T}^m)}<\infty,\,\,\,\,\,
\end{equation} Nevertheless, it is  known fact that the index of periodic elliptic Fourier multipliers is null.
\end{example}
\section*{Acknowledgment}
We want to thank the anonymous referee for his/her valuable comments which improves the presentation of the paper. Vishvesh Kumar  wishes to thank Council of Scientific and Industrial Research, India, for its research grant. Currently, 	the authors are supported by FWO Odysseus 1 grant G.0H94.18N: Analysis and Partial Differential Equations of Prof. Michael Ruzhansky.

\bibliographystyle{amsplain}

\end{document}